\newtheorem{corollary}{Corollary}[section]
\newtheorem{theorem}{Theorem}[section]
\newtheorem{lemma}{Lemma}[section]
\newtheorem{remark}{Remark}[section]
\newtheorem{definition}{Definition}[section]
\newcommand{\abs}[1]{\left\lvert #1 \right\rvert}
\def\R{{\mathbb R}}
\def\C{{\mathbb C}}
\def\Z{{\mathbb Z}}
\def\bc{\begin{center}}
\def\ec{\end{center}}
\def\be{\begin{equation}}
\def\ee{\end{equation}}
\def\ba{\begin{array}}
\def\ea{\end{array}}
\def\bea{\begin{eqnarray}}
\def\eea{\end{eqnarray}}
\def\beaa{\begin{eqnarray*}}
\def\eeaa{\end{eqnarray*}}
\def\ifl{\iffalse}
\begin{document}

\title[]{Weak separability and partial Fermi isospectrality of discrete periodic Schr\"odinger operators}
\author[]
{Jifeng Chu$^{1}$, \quad Kang Lyu$^2$, \quad Chuan-Fu Yang$^2$}

\address{$^1$ School of Mathematics, Hangzhou Normal University, Hangzhou 311121, China}
\address{$^2$ School of Mathematics and Statistics, Nanjing University of Science and Technology, Nanjing 210094, China}
\email{jifengchu@126.com (J. Chu)}
\email{lvkang201905@outlook.com (K. Lyu)}
\email{chuanfuyang@njust.edu.cn (C. Yang)}
\thanks{Jifeng Chu was supported by the National Natural Science Foundation of China (Grant
No. 12571168) and by the Science and Technology Innovation Plan of Shanghai (Grant No. 23JC1403200).}

\subjclass[2020]{Primary 47B36, 35P05, 35J10.}

\keywords{Floquet isospectrality, weak separability, generalized partial Fermi isospectrality, discrete periodic Schr\"odinger operators}

\begin{abstract}
In this paper, we consider the discrete periodic Schr\"odinger operators $\Delta+V$ on $\Z^d$, where
$V$ is  $\Gamma$-periodic with $\Gamma=q_1 \mathbb{Z}\oplus q_2\mathbb{Z}\oplus\cdots\oplus q_d\mathbb{Z}$ and
positive integers $q_j$, $j=1,2,\cdots,d,$ are pairwise coprime.
{We introduce the notions of generalized partial Fermi isospectrality and weak separability, and prove that two {generalized} partially Fermi isospectral potentials have the same weak separability. As a direct application, we can prove that two potentials have the same $(d_1,d_2,\cdots,d_r)$-separability by assuming that they are generalized partially Fermi isospectral,
	  instead of the  Fermi isospectrality or Floquet isospectrality. Besides, we prove that each couples of components of the generalized Fermi isospectral potentials are Floquet isospectral in some sense.}
\end{abstract}

\maketitle

\section{Introduction}
In the past several decades, the study of inverse spectral problems has become an important topic in the theory of differential equations.
See the monographs \cite{pt, yu} for detailed discussions and \cite{dt, eck-1, eck-2,e, g, gu, imt, it, mt, tru, wu} for some interesting results. In the study of inverse spectral theory, a key problem is to know whether or how spectral data can determine the potential. For the one dimensional case, it is well-known that two different sequences of eigenvalues, or one sequence of eigenvalues together with {norming constants} can recover the potential uniquely. See \cite{hm, marchen, ml,rs,yu} for more results. However, the existence of nontrivial families of isospectral potentials, discovered in both mathematical theory and physical models, show that spectral data alone may not always guarantee uniqueness. {From a physical viewpoint, isospectral operators correspond to different systems that exhibit identical energy spectra or vibration frequencies. See \cite{ert-1,ert-2,flmp,gk-1,gk-2,k-1,k-2,k-3,liucmp, w} for results on isospectrality. Among these results, a class of problems related to the separability of potentials has been of particular interest.}
For example,
let ${\rm Iso}(V)=\{\tilde{V}\in L^2(T):{\rm spec}(\tilde{V})={\rm spec}({V})\}$ denote the isospectral set of $V$, where spec$(V)$ is the set of eigenvalues of the continuous Schr\"{o}dinger operator $-\Delta_{\rm continuous}+V$ and $T=\R^d/(q_1\Z\times\cdots\times q_d\Z)$ with $d\geq 2$ is a rectangular torus. Eskin, Ralston and Trubowitz proved in \cite{ert-1, ert-2} that $\tilde{V}$ is also completely separable if $V$ is completely separable and ${\rm spec}(V)={\rm spec}(\tilde{V})$,
here $V$ is completely separable means that it can be written as the form $V(x_1,x_2,\cdots,x_d)=\sum_{i=1}^dV_i(x_i)$ with $V_i\in L^2(\R/q_i\Z)$.
Later  Gordon and Kappeler in \cite{gk-1} proved that if $V$ and $\tilde{V}$ are isospectral, completely separable potentials, then the one-dimensional potentials $V_i$ and $\tilde{V}_i$ are also isospectral, $1\leq i\leq d$, up to constants. 

In this paper, we consider the discrete periodic Schr\"odinger equation
\begin{align}
	\Delta u(n)+V(n)u(n)=\lambda u(n),\ n\in\mathbb{Z}^d,\label{mainequation}
\end{align}
with the so-called Floquet-Bloch boundary condition
\begin{align}
	u(n+q_j\mathbf{e}_j)=e^{2\pi \mathbf{i}k_j}u(n),\ j=1,2,\cdots,d,\quad n\in\mathbb{Z}^d,\label{flobloboundary}
\end{align}
{where $\{\mathbf{e}_j\}_{j=1}^d$ is the standard basis in $\mathbb{Z}^d$} and $\Delta=\Delta_{\rm discrete}$ is the discrete Laplacian
on $\ell^2(\Z^d)$ defined as
\[(\Delta u)(n)=\sum_{\|n'-n\|=1}u(n'),\]
with $n=(n_1,n_2,\cdots,n_d)\in\Z^d$, $n'=(n'_1,n'_2,\cdots,n'_d)\in\Z^d$,
$\|n'-n\|=\sum_{l=1}^{d}|n_l-n'_l|$ and
$V$ is $\Gamma$-periodic, $\Gamma=q_1\mathbb{Z}\oplus q_2\mathbb{Z}\oplus\cdots\oplus q_d\mathbb{Z}$ and $q_1,q_2,\cdots,q_d$ are pairwise coprime positive integers.
It follows from the standard Floquet
theory that problem \eqref{mainequation}-\eqref{flobloboundary} can be realized as the eigenvalues problem of a $Q\times Q$ matrix $D_V(k)$, where $Q=q_1q_2\cdots q_d$ and $k=(k_1,k_2,\cdots,k_d)$. To understand such spectral problems for periodic operators, it is known that both Bloch variety and Fermi variety play a significant role \cite{aim,bat-1, bat-2, flm-1,gkt, kt,kv-2, liu22, liujmp, lmt,sh}. The Bloch variety of $\Delta+V$ is defined as
\begin{align}
	B(V)=\{(k,\lambda)\in\C^{d+1}:\det(D_V(k)-\lambda I)=0\},\nonumber
\end{align}
and given any $\lambda\in\C$, Fermi variety of $\Delta+V$ is defined as $F_{\lambda}(V)=\{k\in\C^d:(k,\lambda)\in B(V)\}$. Eskin-Ralston-Trubowitz and Gordon-Kappeler  proposed the problem concerning the relation between (Floquet) isospectrality and separability of potentials in  \cite{ert-1, ert-2, gk-1}, which was further studied in \cite{flmp,k-2,k-3,liucmp,liujde,liucpam}. A function $V$ on $\mathbb{Z}^d$ is called $(d_1,d_2,\cdots,d_r)$-separable with $\sum_{j=1}^rd_j=d$ and $r\geq 2$, $d_j\geq 1$ if there exist functions $V_j$ on $\mathbb{Z}^{d_j}$, $j=1,2,\cdots,r$, such that for any $n=(n_1,n_2,\cdots,n_d)\in \mathbb{Z}^d$,
\begin{align}
	V(n)=V_1(n_1,n_2,\cdots,n_{d_1})+V_2(n_{d_1+1},\cdots,n_{d_1+d_2})+\cdots+V_r(n_{d_1+d_2+\cdots+d_{r-1}+1},\cdots,n_d).\nonumber
\end{align}
Two potentials $V$ and $Y$ are said to be Floquet isospectral if $B(V)=B(Y)$. In other words, $\det(D_V(k)-\lambda I)=\det(D_Y(k)-\lambda I)$ for any $k\in\R^d$ and any $\lambda\in\C$.  Liu in \cite{liucpam} proposed the notion of Fermi isospectrality which is weaker than the Floquet isospectrality. Two potentials $V$ and $Y$ are said to be Fermi isospectral if for some given $\lambda_0\in\C$, $\det(D_V(k)-\lambda_0I)= \det(D_Y(k)-\lambda_0I)$ for any $k\in \R^d$.  In \cite{liucpam}, Liu proved the following results, which are much related to our paper.
\begin{theorem}\cite{liucpam}\label{theoliucpam}
	Let $q_j$, $j=1,\cdots,d$ be pairwise coprime positive integers with $d\geq 3$.
	\begin{itemize}
		\item[(I)] Assume that real-valued $\Gamma$-periodic functions $V$ and $Y$ are Fermi isospectral, and $V$ is $(d_1,d_2,\cdots,d_r)$-separable, then $Y$ is $(d_1,d_2,\cdots,d_r)$-separable.
		\item[(II)] Assume that complex-valued $\Gamma$-periodic functions $V=\oplus_{j=1}^rV_j$ and $Y=\oplus_{j=1}^rY_j$ are Fermi isospectral and $(d_1,d_2,\cdots,d_r)$-separable, then each couples of lower dimensional functions $V_j$ and $Y_j$ are Floquet isospectral {\rm(}up to a constant{\rm)}.
	\end{itemize}
\end{theorem}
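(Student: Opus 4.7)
The plan is to translate Fermi isospectrality into an identity of Laurent polynomials in the Floquet variables $z_j=e^{2\pi\mathbf{i}k_j}$ and then extract the separable structure from the irreducible decomposition of the corresponding variety. Writing
\[P_V(k,\lambda_0):=\det(D_V(k)-\lambda_0 I),\]
Fermi isospectrality is precisely $P_V(k,\lambda_0)=P_Y(k,\lambda_0)$ as Laurent polynomials in $z=(z_1,\ldots,z_d)$. Separability of $V$ will impose a very specific decomposition of the Fermi variety $F_{\lambda_0}(V)$ whose irreducible components each depend on $k$ only through a sum $f_1(k^{(1)})+\cdots+f_r(k^{(r)})$ of blockwise algebraic functions; transferring this description to $F_{\lambda_0}(Y)$ and inverting the construction should yield the separability of $Y$.

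For part (I), the key observation is that if $V=V_1+\cdots+V_r$ is $(d_1,\ldots,d_r)$-separable with $V_j$ depending only on the $j$-th block of variables, then, after the obvious tensor-product identification of $\C^Q$ with $\bigotimes_j\C^{Q_j}$ (where $Q_j=\prod_{i\in I_j}q_i$), the fiber matrix $D_V(k)$ becomes the Kronecker sum $\sum_j I\otimes\cdots\otimes D_{V_j}(k^{(j)})\otimes\cdots\otimes I$. Its eigenvalues are sums $\mu^{(1)}_{l_1}(k^{(1)})+\cdots+\mu^{(r)}_{l_r}(k^{(r)})$ of eigenvalues of the $D_{V_j}(k^{(j)})$, so
\[P_V(k,\lambda_0)=\prod_{(l_1,\ldots,l_r)}\Bigl(\lambda_0-\sum_{j=1}^r\mu^{(j)}_{l_j}(k^{(j)})\Bigr),\]
and each irreducible component of $F_{\lambda_0}(V)$ is a level set of a sum of algebraic functions of disjoint variable blocks. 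The decisive technical input, which uses the pairwise coprime hypothesis on $(q_1,\ldots,q_d)$, is the irreducibility/rigidity theorem for discrete Bloch and Fermi varieties (Fillman--Liu--Matos and refinements by Liu): under coprimality the factorization of $P_V$ into ``variable-separated'' irreducible factors is essentially unique. Applied to $P_V=P_Y$, this forces $P_Y$ to admit the same blockwise decomposition, and a separation-of-variables argument recovers functions $Y_j$ on $\Z^{d_j}$ such that $Y=Y_1+\cdots+Y_r$.

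For part (II), we now know both potentials are $(d_1,\ldots,d_r)$-separable, so both sides of $P_V=P_Y$ carry the explicit product form displayed above, with $Y$-eigenvalues $\nu^{(j)}_l(k^{(j)})$ in place of $\mu^{(j)}_l(k^{(j)})$. Matching the irreducible factors supplied by the uniqueness result and isolating their dependence on each block $k^{(j)}$, one finds that after a suitable relabeling the difference $\mu^{(j)}_l(k^{(j)})-\nu^{(j)}_l(k^{(j)})$ is a constant $c_j$ independent of $k^{(j)}$ and of the label $l$, with $\sum_j c_j=0$. Collecting these identities over all labels $l$ yields the polynomial equality
\[\det\bigl(D_{V_j}(k^{(j)})-\lambda I\bigr)=\det\bigl(D_{Y_j}(k^{(j)})-(\lambda-c_j)I\bigr)\]
for every $\lambda\in\C$ and every $k^{(j)}\in\R^{d_j}$, which is precisely the Floquet isospectrality of $V_j$ and $Y_j+c_j$.

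The central obstacle throughout is the uniqueness of the blockwise factorization. Without the pairwise coprime assumption on $(q_1,\ldots,q_d)$ the Bloch variety is known to split further, and a separable potential can be Fermi isospectral to a non-separable one, so this hypothesis is genuinely used and not a mere convenience. The hard part of the proof is therefore not the algebraic bookkeeping but the invocation — and, if needed, the reinforcement at the single fixed energy $\lambda_0$ — of the irreducibility results for the discrete Bloch and Fermi varieties on $\Z^d$; once they are in place, everything else reduces to standard linear algebra and the separation-of-variables principle for Laurent polynomials.
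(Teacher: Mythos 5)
Your Kronecker-sum description of $D_V(k)$ for a separable $V$ and the resulting product formula for $P_V(k,\lambda_0)$ over eigenvalue sums are correct, but the step everything hinges on --- an ``essentially unique factorization of $P_V(\cdot,\lambda_0)$ into variable-separated irreducible factors'' deduced from the irreducibility theorems for Bloch/Fermi varieties --- does not exist in the form you need, and those theorems in fact point the other way: by \cite{liu22}, for $d\ge 3$ the Fermi variety $F_{\lambda}(V)$ is irreducible for \emph{every} periodic $V$, including separable ones, so $\mathcal{P}_V(\cdot,\lambda_0)$ admits no nontrivial Laurent-polynomial factorization to match. The factors $\lambda_0-\sum_j\mu^{(j)}_{l_j}(k^{(j)})$ are algebraic functions (eigenvalue branches permuted by monodromy), not polynomial factors, so no factor-matching argument can be run on the single identity $P_V(\cdot,\lambda_0)=P_Y(\cdot,\lambda_0)$. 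More importantly, your concluding sentence for (I), that ``a separation-of-variables argument recovers functions $Y_j$'', is exactly the content of the theorem: nothing in the sketch converts information about the one polynomial $P_Y(\cdot,\lambda_0)$ into a pointwise decomposition of the potential $Y$. The proof that works (in \cite{liucpam}, and in the strengthened form proved in this paper) is Fourier-analytic: pass to $A_z+B_V$ via Lemma \ref{lemunitary}, expand $\tilde{\mathcal{P}}_V=\tilde{\mathcal{P}}_Y$ as a Laurent polynomial in $z$, compare the top-degree coefficients to get $[V]=[Y]$ and an identity between sums of $|\hat V(l)|^2$ and $|\hat Y(l)|^2$, then use the coprimality lemma (Lemma \ref{lemcoprimeliucpam}) to choose special points $z$ at which only the linear forms indexed by $\pm(l_1,l_2,l_3)$ vanish, isolating individual Fourier coefficients; separability of $Y$ is then read off from Lemma \ref{stseparablefermi}. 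Note that this argument uses $V,Y$ real (so $B_V,B_Y$ are Hermitian and a vanishing sum of $|\hat Y_1(l')|^2$ forces each term to vanish); your proposal never uses real-valuedness, although part (I) is only asserted for real potentials --- a concrete sign that the intended mechanism is missing. The pairwise-coprimality hypothesis enters through Lemma \ref{lemcoprimeliucpam} and the nonvanishing of $\rho^i_{l_i}-\rho^j_{l_j}$, not through any factorization-uniqueness principle.

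For part (II) the same objection applies to ``matching the irreducible factors''. The workable route is a degeneration rather than factor matching, as in Theorem \ref{maintheorem325fer} of this paper (following \cite{liucpam}): one constrains one coordinate by an equation of the type \eqref{zd1lambda} involving the spectral parameter $\lambda$ and sends the remaining middle-block coordinates to infinity, so that the $Q\times Q$ determinant asymptotically splits into an explicit product of linear forms times the $(\tilde q_1\tilde q_r)\times(\tilde q_1\tilde q_r)$ characteristic determinant of the lower-dimensional operator; equating leading asymptotics of $\tilde{\mathcal{P}}_V$ and $\tilde{\mathcal{P}}_Y$ (together with $[V]-\lambda_1=[Y]-\lambda_2$ from Theorem \ref{v-yequlambda1-2}) yields $\det(D_{V_j}(k^{(j)})-\lambda I)=\det(D_{Y_j}(k^{(j)})-(\lambda-c_j)I)$ for all $\lambda$, i.e.\ Floquet isospectrality up to a constant. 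As written, your proposal has a genuine gap at its central step in both parts.
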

Theorem \ref{theoliucpam} tells us that Fermi isospectrality implies the coincidence of $(d_1,d_2,\cdots,d_r)$-separability. However, we point out  that such a $(d_1,d_2,\cdots,d_r)$-separability is also a relatively strong property because it requires that the function has a complete separability in the directions $(n_1,n_2,\cdots,n_{d_1})$, $(n_{d_1+1},\cdots,n_{d_1+d_2})$, $\cdots,(n_{d_1+d_2+\cdots+d_{r-1}+1},\cdots,n_d)$. In fact, if a function $V$ has some separability, it may only be separable in some (or even two) directions. For example, $V(n_1,n_2,\cdots,n_d)=V_1(n_1,n_3,\cdots,n_d)+V_2(n_2,n_3,\cdots,n_d)$ is separable in the first two directions, but does not possess a separability when considering all directions at the same time. We will introduce a notion of weak separability (see Definition \ref{defstsep} and Definition \ref{defstseptilde}), it turns out that such weak separability
can cover a wide class of separable functions, including the $(d_1,d_2,\cdots,d_r)$-separability.
On the other hand, from a geometric point of view, Fermi isospectrality requires less information than Floquet isospectrality because Floquet isospectrality requires that two $(d+1)$-dimensional Bloch varieties are the same, while Fermi isospectrality only requires that the projections of Bloch varieties on $d$-dimensional subspace are the same. In this paper, we introduce a notion of generalized partial Fermi isospectrality, which is even weaker than the Fermi isospectrality, and we will show that such new notion of isospectrality is completely suitable to study the weak separability. In some sense, it only requires that the projections of Bloch varieties $B(V)$ and $B(Y)$ on some lower dimensional subspace are the same after translation.
As an application, we can prove that two potentials have the same $(d_1,d_2,\cdots,d_r)$-separability by assuming that the projections of Fermi varieties or  Bloch varieties on some $3$-dimensional subspace are the same. Thus we can improve and generalize  the results in \cite{liucpam}, where it was required the coincidence of the whole Fermi varieties. Besides, we prove that each couples of components of the generalized Fermi isospectral potentials are Floquet isospectral in some sense.

The rest part of this paper is organized as follows. In Section 2, we introduce the new separability of functions and prove some properties of such separability. In Section 3, we introduce the notion of generalized partial Fermi isospectrality and prove our main results.
Moreover, we also prove a result about the Floquet isospectrality of the components of two generalized Fermi isospectral potentials, which allows us to explore more complicated separability of two Fermi (Floquet) isospectral potentials.

\section{Weak separability}
\setcounter{equation}{0}
In this section, we will introduce a notion of weak separability, which can be regarded as a generalization of the so-called $(d_1,d_2,\cdots,d_r)$-separability. Besides, we will establish a relation between the weak separability and the $(d_1,d_2,\cdots,d_r)$-separability. Let $\sum_{j=1}^rd_j=d$ with $r\geq 2,d_j\geq1,\ j=1,2,\cdots,r$. For convenience, let $d_0=0$. For any $x=(x_1,x_2,\cdots,x_d)\in\mathbb{C}^d$ and any $m=1,2,\cdots,r$, we denote $\tilde{x}_m$ by
$$\tilde{x}_m=\left(x_{1+\sum_{j=0}^{m-1}d_j},x_{2+\sum_{j=0}^{m-1}d_j},\cdots,x_{\sum_{j=0}^{m}d_j}\right)\in \mathbb{C}^{d_m}.$$

Let $W$ be a fundamental domain for $\Gamma=q_1\mathbb{Z}\oplus q_2\mathbb{Z}\oplus \cdots\oplus q_d\mathbb{Z}$, that is
\begin{align}
W=\{n=(n_1,n_2,\cdots,n_d)\in\mathbb{Z}^d:0\leq n_j\leq q_j-1,j=1,2,\cdots,d\}.\nonumber
\end{align}
Define the discrete Fourier transform $\hat{V}(l)$ on $W$ for $\Gamma$-periodic function $V$ by
\begin{align}
\hat{V}(l)=\frac{1}{Q}\sum_{n\in W}V(n)e^{-2\pi \mathbf{i}\left(\sum_{j=1}^d\frac{l_jn_j}{q_j}\right)},\nonumber
\end{align}
which can be extended to $\mathbb{Z}^d$ periodically by the way $\hat{V}(l)=\hat{V}(n)$
for any $l\equiv n\mod\Gamma$. Let
\begin{align}
[V]=\frac{1}{Q}\sum_{n\in W}V(n)\nonumber
\end{align}
be the average of $V$ over the periodicity cell. It is obvious that $[V]=\hat{V}(0,0,\cdots,0)$.

\begin{definition}\label{defstsep}
Let
$1\leq s<t\leq d$, we say that the function $V$ is $(s,t)$-separable if there exist two functions $V_s:\mathbb{Z}^{d-1}\to\mathbb{C}$ and $V_t:\mathbb{Z}^{d-1}\to\mathbb{C}$ such that for any $n=(n_1,n_2,\cdots,n_d)\in\mathbb{Z}^d$,
\begin{align}
V(n_1,n_2,\cdots,n_d)=V_s({n}_1,{n}_2,\cdots,{n}_{t-1},{n}_{t+1},\cdots,{n}_d)+V_t({n}_1,{n}_2,\cdots,{n}_{s-1},{n}_{s+1},\cdots,{n}_d).\nonumber
\end{align}
\end{definition}

{Let us remark that the above definition can cover a wide class of separable potentials. Indeed, as long as a function $V$ on $\Z^d$ has some separability, there must exist some $s,t$ and $V_s:\Z^{d-1}\to\C$ and $V_t:\Z^{d-1}\to\C$ such that {$V(n)=V_s({n}_1,{n}_2,\cdots,{n}_{t-1},{n}_{t+1},\cdots,{n}_d)+V_t({n}_1,{n}_2,\cdots,{n}_{s-1},{n}_{s+1},\cdots,{n}_d).$ With possible permutation}, we can rewrite it as $V(n)=V_1(n_1,n_3,\cdots,n_d)+V_2(n_2,n_3,\cdots,n_d)=V_1(n_1,\tilde{n}_3)+V_2(n_2,\tilde{n}_3)$, where $\tilde{n}_3=(n_3,n_4,\cdots,n_d)$.
	The following notion is a natural generalization.

\begin{definition}\label{defstseptilde}
	Let $V$ be a complex function on $\Z^d$. We say that $V$ is $(d_1,d_2,\cdots,d_{r-1})\oplus d_r$-separable if there exist $V_{j}:\mathbb{Z}^{d_j+d_r}\to\mathbb{C}$, $j=1,2,\cdots,r-1$, such that for any $n=(n_1,\cdots,n_d)=(\tilde{n}_1,\tilde{n}_2,\cdots,\tilde{n}_r)\in\mathbb{Z}^d$,
	\begin{align}
		V(n_1,n_2,\cdots,n_d)=V_1(\tilde{n}_1,\tilde{n}_r)+V_2(\tilde{n}_2,\tilde{n}_r)+\cdots+V_{r-1}(\tilde{n}_{r-1},\tilde{n}_r).\nonumber
\end{align}\end{definition}
}

It is obvious that if the function $V$ is $(d_1,d_2,\cdots,d_r)$-separable, then $V$ is $(s,t)$-separable for any $1\leq i<m\leq r$ and any $s\in \{1+\sum_{j=0}^{i-1}d_j,2+\sum_{j=0}^{i-1}d_j,\cdots,\sum_{j=0}^{i}d_j\}$, $t\in \{1+\sum_{j=0}^{m-1}d_j,2+\sum_{j=0}^{m-1}d_j,\cdots,\sum_{j=0}^{m}d_j\}$.
{Besides, if the function $V$ is $(d_1,d_2,\cdots,d_{r-1})\oplus d_r$-separable, then $V$ is $(s,t)$-separable for any $1\leq i<m\leq r-1$ and any $s\in \{1+\sum_{j=0}^{i-1}d_j,2+\sum_{j=0}^{i-1}d_j,\cdots,\sum_{j=0}^{i}d_j\}$, $t\in \{1+\sum_{j=0}^{m-1}d_j,2+\sum_{j=0}^{m-1}d_j,\cdots,\sum_{j=0}^{m}d_j\}$.
We will show that the reverse results also hold.}

\begin{lemma}\label{stseparablefermi}
Let $1\leq s<t\leq d$. A $\Gamma$-periodic function $V:\Z^d\to \C$ is $(s,t)$-separable if and only if $\hat{V}(l)=0$ for any $l=(l_1,l_2,\cdots,l_d)\in W$ with $l_s\neq 0,l_t\neq 0$.
\end{lemma}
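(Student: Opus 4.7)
The plan is to translate the claim into the language of the discrete Fourier transform on $W$ and then exploit a single orthogonality identity: for a $\Gamma$-periodic function, independence of the variable $n_j$ corresponds exactly to the support of $\hat V$ being contained in $\{l_j = 0\}$. Once this is in hand, both directions reduce to a clean bookkeeping on $W$ partitioned according to whether $l_s$ and $l_t$ vanish.

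For the "only if" direction, I would start from the decomposition $V=V_s+V_t$ in Definition \ref{defstsep}, regarding $V_s$ and $V_t$ as $\Gamma$-periodic functions on $\Z^d$ that are trivially extended in the missing coordinate. Linearity gives $\hat V = \hat V_s + \hat V_t$. Since $V_s$ does not depend on $n_t$, the factor $\frac{1}{q_t}\sum_{n_t=0}^{q_t-1} e^{-2\pi \mathbf{i} l_t n_t/q_t}$ appearing in $\hat V_s(l)$ vanishes whenever $l_t \not\equiv 0 \pmod{q_t}$; symmetrically $\hat V_t(l)=0$ unless $l_s \equiv 0 \pmod{q_s}$. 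Consequently $\hat V(l)=0$ for every $l$ with both $l_s \neq 0$ and $l_t \neq 0$.

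For the "if" direction, I would apply Fourier inversion
$$V(n)=\sum_{l\in W}\hat V(l)\,e^{2\pi \mathbf{i}\sum_{j=1}^{d}\frac{l_j n_j}{q_j}},$$
and partition $W = A\sqcup B\sqcup C$ with $A=\{l\in W:l_t=0\}$, $B=\{l\in W:l_s=0,\ l_t\neq 0\}$ and $C=\{l\in W:l_s\neq 0,\ l_t\neq 0\}$. The hypothesis annihilates the contribution from $C$, and what remains is
$$V(n)=\sum_{l\in A}\hat V(l)\,e^{2\pi \mathbf{i}\sum_{j}\frac{l_j n_j}{q_j}}+\sum_{l\in B}\hat V(l)\,e^{2\pi \mathbf{i}\sum_{j}\frac{l_j n_j}{q_j}}.$$
Every term in the first sum has $l_t=0$, so it depends only on the coordinates $(n_1,\ldots,n_{t-1},n_{t+1},\ldots,n_d)$ and will serve as $V_s$; every term in the second sum has $l_s=0$, so it depends only on $(n_1,\ldots,n_{s-1},n_{s+1},\ldots,n_d)$ and will serve as $V_t$. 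This is exactly the $(s,t)$-separable decomposition.

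The argument is essentially Fourier orthogonality and I do not foresee a genuine obstacle. The only point requiring care is that the "overlap" index set $\{l_s=l_t=0\}$ must be assigned to exactly one summand; the strict inequality $l_t\neq 0$ in the definition of $B$ above accomplishes this and avoids any double-counting when reconstructing $V$.
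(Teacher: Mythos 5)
Your proposal is correct and follows essentially the same route as the paper: the forward direction factors out the vanishing character sum over the variable on which each summand does not depend, and the backward direction uses Fourier inversion with exactly the same partition of $W$ into $\{l_t=0\}$ and $\{l_s=0,\ l_t\neq 0\}$ (the paper likewise assigns the overlap $\{l_s=l_t=0\}$ to the $V_s$ piece). No gaps to report.
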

\begin{proof}
Suppose that $V$ is $(s,t)$-separable, then there exist $V_s:\Z^{d-1}\to\C$ and $V_t:\Z^{d-1}\to\C$ such that for any $n=(n_1,n_2,\cdots,n_d)\in\Z^d$,
\begin{align}
V(n)=V_s({n}_1,{n}_2,\cdots,{n}_{t-1},{n}_{t+1},\cdots,{n}_d)+V_t({n}_1,{n}_2,\cdots,{n}_{s-1},{n}_{s+1},\cdots,{n}_d).\nonumber
\end{align}
Then for any $l=(l_1,l_2,\cdots,l_d)\in W$ with $l_s\neq 0$, $l_t\neq 0$, one has
\begin{align}
\hat{V}(l)=&\frac{1}{Q}\sum_{n\in W}V(n)e^{-2\pi \mathbf{i}\sum_{j=1}^d\frac{n_jl_j}{q_j}}\nonumber\\
=&\frac{1}{Q}\sum_{n\in W}V_s({n}_1,{n}_2,\cdots,{n}_{t-1},{n}_{t+1},\cdots,{n}_d)e^{-2\pi \mathbf{i}\sum_{j=1}^d\frac{n_jl_j}{q_j}}\nonumber\\
&+\frac{1}{Q}\sum_{n\in W}V_t({n}_1,{n}_2,\cdots,{n}_{s-1},{n}_{s+1},\cdots,{n}_d)e^{-2\pi \mathbf{i}\sum_{j=1}^d\frac{n_jl_j}{q_j}}.\nonumber
\end{align}
Since for any $l_s\in \{1,2,\cdots,q_s-1\}$ and $l_t\in \{1,2,\cdots,q_t-1\}$, one has
$$\sum_{n_s=0}^{q_s-1}e^{-2\pi\mathbf{i}\frac{n_sl_s}{q_s}}=0,\  \sum_{n_t=0}^{q_t-1}e^{-2\pi\mathbf{i}\frac{n_tl_t}{q_t}}=0.$$
Thus $\hat{V}(l)=0$.

On the other hand, assume that $\hat{V}(l)=0$ for any $l\in W$ with $l_s\neq0,l_t\neq 0$. Then by the inverse discrete Fourier transform we have for any $n=(n_1,n_2,\cdots,n_d)\in \Z^d$,
\begin{align}
V(n)=&\sum_{l\in W}\hat{V}(l)e^{2\pi\mathbf{i}\sum_{j=1}^d\frac{n_jl_j}{q_j}}\nonumber\\
=&\sum_{l\in W,l_s=0,l_t\neq 0}\hat{V}(l)e^{2\pi\mathbf{i}\sum_{j=1}^d\frac{n_jl_j}{q_j}}+\sum_{l\in W,l_t=0}\hat{V}(l)e^{2\pi\mathbf{i}\sum_{j=1}^d\frac{n_jl_j}{q_j}}.\nonumber
\end{align}
Taking
\begin{align}
V_t(n_1,n_2,\cdots,n_{s-1},n_{s+1},\cdots,n_d)&=\sum_{l\in W,l_s=0,l_t\neq 0}\hat{V}(l)e^{2\pi\mathbf{i}\sum_{j=1}^d\frac{n_jl_j}{q_j}},\nonumber\\
V_s(n_1,n_2,\cdots,n_{t-1},n_{t+1},\cdots,n_d)&=\sum_{l\in W,l_t=0}\hat{V}(l)e^{2\pi\mathbf{i}\sum_{j=1}^d\frac{n_jl_j}{q_j}}.\nonumber
\end{align}
Then $V(n)=V_s(n_1,n_2,\cdots,n_{t-1},n_{t+1},\cdots,n_d)+V_t(n_1,n_2,\cdots,n_{s-1},n_{s+1},\cdots,n_d)$ and thus $V$ is $(s,t)$-separable.
\end{proof}

In the following we use $\mathbf{0}$ to denote the zero vector in the corresponding space, whose dimension may change {even in the same formula}.

{\begin{theorem}\label{theoremoplussp}
	Let $V$ be a complex $\Gamma$-periodic function on $\mathbb{Z}^d$. Then $V$ is $(d_1,d_2,\cdots,d_{r-1})\oplus d_r$-separable if and only if
	$V$ is $(s,t)$-separable for any $1\leq i<m\leq r-1$ and for any $s\in \{1+\sum_{j=0}^{i-1}d_j,2+\sum_{j=0}^{i-1}d_j,\cdots,\sum_{j=0}^{i}d_j\}$, $t\in \{1+\sum_{j=0}^{m-1}d_j,2+\sum_{j=0}^{m-1}d_j,\cdots,\sum_{j=0}^{m}d_j\}$.
\end{theorem}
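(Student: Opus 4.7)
\medskip

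\noindent\textbf{Proof proposal.}
The forward direction is immediate from the observation already recorded in the paragraph preceding the theorem: if $V(n)=\sum_{i=1}^{r-1}V_i(\tilde n_i,\tilde n_r)$, then for $s$ in block $i$ and $t$ in block $m$ with $1\le i<m\le r-1$, every summand is constant in either $n_s$ or $n_t$ (the $j$-th summand for $j\notin\{i,m\}$ is constant in both, $V_i$ is constant in $n_t$, and $V_m$ is constant in $n_s$), so one gets the required $(s,t)$-splitting. So the work is in the converse.

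The plan for the converse is to pass to the Fourier side and use Lemma \ref{stseparablefermi} pair by pair. First, I would translate the pairwise hypothesis into a support statement for $\hat V$: for every $1\le i<m\le r-1$ and every $s$ in block $i$, $t$ in block $m$, Lemma \ref{stseparablefermi} gives $\hat V(l)=0$ whenever $l_s\ne 0$ and $l_t\ne 0$. Taking the conjunction over all such pairs shows that the nonzero entries of $l$ among the first $d-d_r$ coordinates must lie in a single block; in other words, $\mathrm{supp}\,\hat V\subseteq \bigcup_{i=1}^{r-1}A_i$, where
\[
A_i=\{l\in W:\ l_j=0\text{ for every }j\notin(\text{block }i)\cup(\text{block }r)\}.
\]

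Next I would partition this support into disjoint pieces. Set
\[
A_0=\{l\in W:\ l_j=0\text{ for all }j\text{ outside block }r\}=\bigcap_{i=1}^{r-1}A_i,
\]
and for $1\le i\le r-1$ let $B_i=\{l\in A_i:\ l_{j}\ne 0\text{ for some }j\text{ in block }i\}$. Then $A_0,B_1,\ldots,B_{r-1}$ are pairwise disjoint and their union equals $\bigcup_i A_i$. Applying the inverse discrete Fourier transform,
\begin{align*}
V(n)&=\sum_{l\in A_0}\hat V(l)e^{2\pi\mathbf{i}\sum_j\frac{n_jl_j}{q_j}}+\sum_{i=1}^{r-1}\sum_{l\in B_i}\hat V(l)e^{2\pi\mathbf{i}\sum_j\frac{n_jl_j}{q_j}}.
\end{align*}
By construction, the first sum depends only on $\tilde n_r$, and each inner sum in the second expression depends only on $\tilde n_i$ and $\tilde n_r$. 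Defining $V_0(\tilde n_r)$ and $\tilde V_i(\tilde n_i,\tilde n_r)$ to be those sums, and then absorbing $V_0$ into $\tilde V_1$ by setting $V_1(\tilde n_1,\tilde n_r)=\tilde V_1(\tilde n_1,\tilde n_r)+V_0(\tilde n_r)$ and $V_i=\tilde V_i$ for $i\ge 2$, yields the decomposition of Definition \ref{defstseptilde}.

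I expect no genuine obstacle, but the delicate point is the bookkeeping that converts the pairwise support vanishing into the single-block support statement: one needs that if $l$ has two coordinates nonzero in two distinct blocks among $1,\ldots,r-1$, one can pick representatives $s$ and $t$ to invoke the hypothesis. This is a straightforward pigeonhole, and once it is in place the rest is a mechanical disjoint-support reassembly on the Fourier side.
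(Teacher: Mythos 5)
Your argument is correct and follows essentially the same route as the paper: the easy direction from the definition, and the converse by applying Lemma \ref{stseparablefermi} pairwise to show that $\hat V$ vanishes whenever two of the blocks $\tilde l_1,\cdots,\tilde l_{r-1}$ are nonzero, followed by regrouping the inverse Fourier series block by block. The only (immaterial) difference is bookkeeping: you split the support into disjoint sets $A_0,B_1,\cdots,B_{r-1}$ and absorb the $A_0$-part into $V_1$, whereas the paper sums over overlapping sets and corrects with the $-(r-2)$ multiple of the common term.
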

\begin{proof}
	We only need to prove the necessity. Assume that $V$ is $(s,t)$-separable for any $1\leq i<m\leq r-1$ and for any $s\in \{1+\sum_{j=0}^{i-1}d_j,2+\sum_{j=0}^{i-1}d_j,\cdots,\sum_{j=0}^{i}d_j\}$, $t\in \{1+\sum_{j=0}^{m-1}d_j,2+\sum_{j=0}^{m-1}d_j,\cdots,\sum_{j=0}^{m}d_j\}$.
	 It follows from Lemma \ref{stseparablefermi} that $\hat{V}(l)=0$ for any $l=(l_1,l_2,\cdots,l_d)\in W$ with $l_s\neq 0,l_t\neq 0$, where $s\in \{1+\sum_{j=0}^{i-1}d_j,2+\sum_{j=0}^{i-1}d_j,\cdots,\sum_{j=0}^{i}d_j\}$, $t\in \{1+\sum_{j=0}^{m-1}d_j,2+\sum_{j=0}^{m-1}d_j,\cdots,\sum_{j=0}^{m}d_j\}$ and $1\leq i<m\leq r-1$. Therefore, we have that $\hat{V}(l)=0$ for any $l=(l_1,l_2,\cdots,l_d)=(\tilde{l}_1,\tilde{l}_2,\cdots,\tilde{l}_r)$ with at least two non-zero $\tilde{l}_j$, $j=1,2,\cdots,r-1$. Then
\begin{align}
	V(n)=&\sum_{l\in W}\hat{V}(l)e^{2\pi\mathbf{i}\sum_{j=1}^d\frac{n_jl_j}{q_j}}\nonumber\\
	=&\left(\sum_{\substack{l\in W\\ \tilde{l}_2=\mathbf{0},\tilde{l}_3=\mathbf{0},\cdots,\tilde{l}_{r-1}=\mathbf{0}}}+\sum_{\substack{l\in W\\ \tilde{l}_1=\mathbf{0},\tilde{l}_3=\mathbf{0},\cdots,\tilde{l}_{r-1}=\mathbf{0}}}+\cdots+\sum_{\substack{l\in W\\ \tilde{l}_1=\mathbf{0},\tilde{l}_2=\mathbf{0},\cdots,\tilde{l}_{r-2}=\mathbf{0}}}\right)\hat{V}(l)e^{2\pi\mathbf{i}\sum_{j=1}^d\frac{n_jl_j}{q_j}}\nonumber\\
	&-(r-2)\times\sum_{\substack{l\in W\\ \tilde{l}_1=\mathbf{0},\tilde{l}_2=\mathbf{0},\cdots,\tilde{l}_{r-1}=\mathbf{0}}}\hat{V}(l)e^{2\pi\mathbf{i}\sum_{j=1}^d\frac{n_jl_j}{q_j}}.\nonumber
\end{align}
By taking
\begin{align}
	V_1(\tilde{n}_1,\tilde{n}_r)=\sum_{\substack{l\in W\\ \tilde{l}_2=\mathbf{0},\tilde{l}_3=\mathbf{0},\cdots,\tilde{l}_{r-1}=\mathbf{0}}}\hat{V}(l)e^{2\pi\mathbf{i}\sum_{j=1}^d\frac{n_jl_j}{q_j}}-(r-2)\times\sum_{\substack{l\in W\\ \tilde{l}_1=\mathbf{0},\tilde{l}_2=\mathbf{0},\cdots,\tilde{l}_{r-1}=\mathbf{0}}}\hat{V}(l)e^{2\pi\mathbf{i}\sum_{j=1}^d\frac{n_jl_j}{q_j}},\nonumber
\end{align}
and
\begin{align}
	V_j(\tilde{n}_j,\tilde{n}_r)&=\sum_{\substack{l\in W\\ \tilde{l}_1=\mathbf{0},\cdots,\tilde{l}_{j-1}=\mathbf{0},\\\tilde{l}_{j+1}=\mathbf{0},\cdots,\tilde{l}_{r-1}=\mathbf{0}}}\hat{V}(l)e^{2\pi\mathbf{i}\sum_{j=1}^d\frac{n_jl_j}{q_j}}, \ j=2,3,\cdots,r-2,\nonumber\\
	V_{r-1}(\tilde{n}_{r-1},\tilde{n}_r)&=\sum_{\substack{l\in W\\ \tilde{l}_1=\mathbf{0},\tilde{l}_2=\mathbf{0},\cdots,\tilde{l}_{r-2}=\mathbf{0}}}\hat{V}(l)e^{2\pi\mathbf{i}\sum_{j=1}^d\frac{n_jl_j}{q_j}},\nonumber
\end{align}
we can see that $V$ is $(d_1,d_2,\cdots,d_{r-1})\oplus d_r$-separable.
\end{proof}
}

As a consequence of Theorem \ref{theoremoplussp}, the following result holds.
\begin{theorem}\label{d1d2drnewsepafermith}
	Let $V$ be a complex $\Gamma$-periodic function on $\mathbb{Z}^d$. Then $V$ is $(d_1,d_2,\cdots,d_r)$-separable if and only if
	$V$ is $(s,t)$-separable for any $1\leq i<m\leq r$ and for any $s\in \{1+\sum_{j=0}^{i-1}d_j,2+\sum_{j=0}^{i-1}d_j,\cdots,\sum_{j=0}^{i}d_j\}$, $t\in \{1+\sum_{j=0}^{m-1}d_j,2+\sum_{j=0}^{m-1}d_j,\cdots,\sum_{j=0}^{m}d_j\}$.
\end{theorem}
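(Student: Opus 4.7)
The forward direction is already noted in the paragraph preceding the theorem (a $(d_1,\ldots,d_r)$-separable function is clearly $(s,t)$-separable for any $s,t$ drawn from two different blocks), so my plan focuses on the reverse implication. The strategy is to mirror the Fourier-analytic argument used to prove Theorem \ref{theoremoplussp}, with Lemma \ref{stseparablefermi} doing the bulk of the work by translating every $(s,t)$-separability hypothesis into a vanishing condition on the discrete Fourier coefficients $\hat{V}(l)$.

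First I would apply Lemma \ref{stseparablefermi} to each assumed $(s,t)$-separability. Letting $(s,t)$ range over every admissible pair with $s$ in the $i$-th block and $t$ in the $m$-th block, $1\le i<m\le r$, the conjunction of the resulting conditions is equivalent to the single statement that $\hat{V}(l)=0$ whenever at least two of the block vectors $\tilde{l}_1,\tilde{l}_2,\ldots,\tilde{l}_r$ are nonzero. Equivalently, the support of $\hat{V}$ is contained in $\bigcup_{j=1}^{r} S_j$, where $S_j=\{l\in W:\tilde{l}_k=\mathbf{0}\text{ for all }k\ne j\}$; observe that any two distinct $S_j$'s intersect exactly at $l=\mathbf{0}$.

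An elementary inclusion-exclusion for the indicators of $\{S_j\}_{j=1}^{r}$, combined with the inverse discrete Fourier transform, then gives
$$V(n)=\sum_{j=1}^{r}\sum_{l\in S_j}\hat{V}(l)\,e^{2\pi\mathbf{i}\sum_{j'=1}^{d}\frac{n_{j'}l_{j'}}{q_{j'}}}\;-\;(r-1)\,\hat{V}(\mathbf{0}).$$
Inside each inner sum only the coordinates of $l$ lying in the $j$-th block can be nonzero, so the exponential depends only on $\tilde{n}_j$ and the inner sum defines a function $V_j(\tilde{n}_j)$. Absorbing the constant $-(r-1)\hat{V}(\mathbf{0})$ into $V_1$ produces the desired decomposition $V=V_1(\tilde{n}_1)+\cdots+V_r(\tilde{n}_r)$, proving the $(d_1,\ldots,d_r)$-separability.

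I do not foresee a serious obstacle: the argument is essentially a refinement of the Fourier computation in the proof of Theorem \ref{theoremoplussp}. The one point that requires a little care is the inclusion-exclusion correction, which amounts to noting that $l=\mathbf{0}$ appears in all $r$ sets $S_j$ while every other allowed frequency lies in a unique $S_j$, so the naive sum $\sum_j\sum_{l\in S_j}$ overcounts only the origin, by precisely $r-1$. An alternative route, more literally in the spirit of the phrase ``as a consequence of Theorem \ref{theoremoplussp},'' would be to apply that theorem repeatedly with different choices of the distinguished last block and then intersect the resulting support conditions; this works but is strictly longer than the direct Fourier decomposition above.
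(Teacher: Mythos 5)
Your argument is correct, and it is essentially the paper's argument carried out explicitly: the paper proves this theorem in one line by setting $d_r=0$ in the proof of Theorem \ref{theoremoplussp} (so that the distinguished last block disappears and $(d_1,\ldots,d_{r-1})\oplus d_r$-separability degenerates to ordinary block separability), and the underlying computation there is exactly your combination of Lemma \ref{stseparablefermi}, the inverse discrete Fourier transform, and the inclusion--exclusion count that the only overcounted frequency is $l=\mathbf{0}$, counted $r$ times. Your direct decomposition $V(n)=\sum_{j=1}^r\sum_{l\in S_j}\hat V(l)e^{2\pi\mathbf{i}\sum_{j'}n_{j'}l_{j'}/q_{j'}}-(r-1)\hat V(\mathbf{0})$ is the same formula the paper writes in the proof of Theorem \ref{theoremoplussp} (with $r-1$ proper blocks there and correction factor $r-2$), so the only difference is presentational: you inline the Fourier computation for all $r$ blocks, whereas the paper invokes the earlier theorem with a degenerate empty block, which is shorter but relies on the slightly informal convention $d_r=0$. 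Either route is acceptable; yours has the minor advantage of not needing that convention.
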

\begin{proof}
Although we have assumed that each $d_j\geq 1$ in the above definitions, we can let $d_r=0$ in the proof of Theorem \ref{theoremoplussp}. Then $(d_1,d_2,\cdots,d_{r-1})\oplus d_r$-separability becomes $(d_1,d_2,\cdots,d_{r-1})$-separability. Now the result follows from Theorem \ref{theoremoplussp}.
\end{proof}

\section{Main results}
\setcounter{equation}{0}
Recall that problem \eqref{mainequation}-\eqref{flobloboundary} can be realized by the eigenvalue problem for a $Q\times Q$ matrix $D_V(k)$.
For example, when $d=1$, $D_V(k)$ is the following $q_1\times q_1$ matrix
\begin{align}
D_V(k_1)=\begin{pmatrix}
V(1)& 1& 0&\cdots&0&\cdots& 0& e^{-2\pi \mathbf{i}k_1}\\
1& V(2)& 1& 0&\cdots& 0&\cdots&0\\
0&1& V(3)& 1& 0&\cdots&0&0\\
\vdots&&&\ddots&&&&\vdots\\
\vdots&&&&\ddots&&&\vdots\\
0&&&&&\ddots&&0\\
0&\cdots&0&\cdots&0&1&V(q_1-1)&1\\
e^{2\pi \mathbf{i}k_1}&0&\cdots&0&\cdots&0&1&V(q_1)
\end{pmatrix}.\nonumber
\end{align}
Let $z=(z_1,z_2,\cdots,z_d)\in \mathbb{C}^d$ with $z_j=e^{2\pi \mathbf{i}k_j},j=1,2,\cdots,d,$ and $\mathcal{D}_V(z)=D_V(k)$,
\begin{align}
\mathcal{P}_V(z,\lambda)=P_V(k,\lambda)=\det(D_V(k)-\lambda I).\label{pvzpvkfer}
\end{align}
It follows that $\mathcal{P}_V(z,\lambda)$ is a polynomial in $\lambda$ and $z_1,z_1^{-1},\cdots,z_d,z_d^{-1}$.
Namely, $\mathcal{P}_V(z,\lambda)$ is a Laurent polynomial of $\lambda,z_1,\cdots,z_d$.
Let $\tilde{\mathcal{D}}_V(z)=\tilde{\mathcal{D}}_V(z_1,z_2,\cdots,z_d)=\mathcal{D}_V(z_1^{q_1},z_2^{q_2},\cdots,z_d^{q_d})$
and
\begin{align}
\tilde{\mathcal{P}}_V(z,\lambda)=\det(\tilde{\mathcal{D}}_V(z)-\lambda I)=\mathcal{P}_V(z_1^{q_1},z_2^{q_2},\cdots,z_d^{q_d},\lambda).\label{pvztilde}
\end{align}
For $0\leq n_j\leq q_j-1$ and $j=1,2,\cdots,d$, let
\[\rho^j_{n_j}=e^{2\pi \mathbf{i}\frac{n_j}{q_j}}.\]
The following result is a direct application of the discrete Floquet transform. See for example \cite{flm-2,ku}.

\begin{lemma} \label{lemunitary}Let $n=(n_1,n_2,\cdots,n_d)\in W$ and $n'=(n_1',n_2',\cdots,n_d')\in W$. Then $\tilde{\mathcal{D}}_V(z)$ is unitarily equivalent to $A_z+B_V$, where $A_z$ is a diagonal matrix with entries
\[A_z(n;n')=\left(\sum_{j=1}^d\left(\rho^j_{n_j}z_j+\frac{1}{\rho^j_{n_j}z_j}\right)\right)\delta_{(n;n')}\quad {\rm with}\quad \delta_{(n;n')}=\left\{\begin{array}{l}
	1, \quad\  n=n',\\
	0,~\quad n\neq n',
\end{array}\right.\]
and
\[B_V(n;n')=\hat{V}(n_1-n_1',n_2-n_2',\cdots,n_d-n_d').\]
In particular, $\tilde{\mathcal{P}}_V(z,\lambda)=\det(A_z+B_V-\lambda I).$
\end{lemma}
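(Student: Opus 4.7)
The plan is to realize $\tilde{\mathcal{D}}_V(z)$ via the discrete Floquet--Bloch transform on the finite abelian group $W\cong\bigoplus_{j=1}^{d}\mathbb{Z}/q_j\mathbb{Z}$. The strategy has two stages: first, strip off the Bloch phase by a diagonal gauge so that the twisted boundary condition becomes exact $\Gamma$-periodicity; second, apply the discrete Fourier transform on $W$, which diagonalizes the shift operators making up $\Delta$ and converts multiplication by $V$ into convolution with $\hat V$.

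For the first stage, I would plug the ansatz $u(n)=z_1^{n_1}z_2^{n_2}\cdots z_d^{n_d}\,v(n)$ into the eigenvalue equation for $\tilde{\mathcal{D}}_V(z)$. The substitution $z_j\mapsto z_j^{q_j}$ built into $\tilde{\mathcal{D}}_V$ produces the boundary condition $u(n+q_j\mathbf{e}_j)=z_j^{q_j}u(n)$, which forces $v$ to be $\Gamma$-periodic, i.e.\ an element of $\ell^{2}(W)$. A direct calculation then rewrites $\tilde{\mathcal{D}}_V(z)u=\lambda u$ as
\begin{align}
\sum_{j=1}^{d}\bigl(z_j\,v(n+\mathbf{e}_j)+z_j^{-1}v(n-\mathbf{e}_j)\bigr)+V(n)v(n)=\lambda v(n),\quad n\in\mathbb{Z}^{d}.\nonumber
\end{align}
In matrix language this is conjugation of $\tilde{\mathcal{D}}_V(z)$ by the diagonal gauge $G=\mathrm{diag}\bigl(z_1^{n_1}\cdots z_d^{n_d}\bigr)_{n\in W}$, which is unitary exactly when $|z_j|=1$.

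For the second stage, I would expand $v$ and $V$ in the characters of $W$, writing $v(n)=\sum_{m\in W}\tilde v(m)\prod_{j=1}^{d}(\rho^{j}_{m_j})^{n_j}$ and using the inversion formula $V(n)=\sum_{m\in W}\hat V(m)\prod_{j=1}^{d}(\rho^{j}_{m_j})^{n_j}$ from Section~2. The identity $(\rho^{j}_{m_j})^{n_j\pm 1}=(\rho^{j}_{m_j})^{\pm 1}(\rho^{j}_{m_j})^{n_j}$ turns the $\Delta$-part into multiplication on the coefficient side by $\sum_{j=1}^{d}\bigl(z_j\rho^{j}_{m_j}+z_j^{-1}(\rho^{j}_{m_j})^{-1}\bigr)$, which is precisely $A_z(m;m)$. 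Meanwhile $V(n)v(n)$ regroups as the convolution $\sum_{m'\in W}\hat V(m-m')\tilde v(m')$, whose matrix has entries $B_V(m;m')=\hat V(m_1-m_1',\ldots,m_d-m_d')$, with the differences read modulo $\Gamma$ in accordance with the periodic extension of $\hat V$ fixed in Section~2. Composing $G$ with the unitary discrete Fourier transform on $W$ then delivers the claimed unitary equivalence.

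The only genuine subtlety is bookkeeping: one must fix a single convention for the discrete Fourier transform in which the factor $1/Q$ in the paper's definition of $\hat V$ is absorbed cleanly into the convolution identity $B_V(m;m')=\hat V(m-m')$. The gauge $G$ is literally unitary only on $|z_j|=1$, but $\tilde{\mathcal{P}}_V(z,\lambda)=\det(A_z+B_V-\lambda I)$ is an equality of Laurent polynomials in $(z,\lambda)$, so once verified on the torus it extends to all $z\in(\mathbb{C}^{\ast})^{d}$ by analytic continuation, giving the final display of the lemma.
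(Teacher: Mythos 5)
Your argument is correct and is exactly the standard discrete Floquet transform computation that the paper itself invokes without proof (it only cites \cite{flm-2,ku}): gauge away the Bloch phase so the twisted problem lives on $\ell^2(W)$, then conjugate by the Fourier transform on $W$, which gives the diagonal part $A_z$ and turns multiplication by $V$ into convolution by $\hat V$, with the determinant identity extending off the torus since conjugation by the (merely invertible) gauge preserves determinants. No gaps worth noting; your bookkeeping remark about the $1/Q$ normalization is the only point requiring care, and it does work out with the paper's convention.
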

{Two potentials $V$ and $Y$ are said to be Fermi isospectral if for some given $\lambda_0\in\C$, $P_V(k,\lambda_0)=P_Y(k,\lambda_0)$ for any $k\in\R^d$. Such a notion of Fermi isospectrality was proposed by Liu (\cite[Remark 2.6]{liucpam}). {Now we introduce the new notions of isospectrality, which are weaker than the Fermi isospectrality.}
\begin{definition}\label{defferisotrans}
Let $d\geq 2$. We say that two functions $V$ and $Y$ are generalized partially Fermi isospectral if there exist $\lambda_1,\lambda_2\in\C$, non-empty set $S\subset\{1,2,\cdots,d\}$, and $k_j^*\in\R$ for each $j\in \{1,2,\cdots,d\}\setminus S$, such that for any $k_j\in \R$, $j\in S$,
\begin{align}
P_V(k,\lambda_1)=P_Y(k,\lambda_2),\label{pvlambda1pylambda2iden2}
\end{align}
where $k=(k_1,k_2,\cdots,k_d)$ with $k_j=k_j^*$ if $j\in \{1,2,\cdots,d\}\setminus S$. In particular, if $\lambda_1=\lambda_2$, we say that $V$ and $Y$ are partially Fermi isospectral. If $S=\{1,2,\cdots,d\}$ and {there exist $\lambda_1,\lambda_2\in\C$ such that \eqref{pvlambda1pylambda2iden2} holds}, we say that $V$ and $Y$ are generalized Fermi isospectral.
\end{definition}
\begin{remark}
	It is obvious that the above definitions depend on the set $S$ and $\lambda_1,\lambda_2$, but we prefer to leave the dependence on $S,\lambda_1$ and $\lambda_2$ implicit if there is no confusion. For the special case $S=\{1,2,\cdots,d\}$ and $\lambda_1=\lambda_2$, the generalized partial Fermi isospectrality become the notion of Fermi isospectrality. Moreover, it is clear that for any non-empty set $S'\subset S$, if $V$ and $Y$ are {generalized} partially Fermi isospectral depending on $S,\lambda_1$ and $\lambda_2$, then $V$ and $Y$ are also {generalized} partially Fermi isospectral depending on $S',\lambda_1$ and $\lambda_2$.
\end{remark}
}

\begin{theorem}\label{v-yequlambda1-2}
Let $V$ and $Y$ be complex $\Gamma$-periodic functions on $\Z^d$. If there exist $\lambda_1,\lambda_2\in\C$ and the set $S\subset\{1,2,\cdots,d\}$ with the cardinality $\# S\geq 2$, such that $V$ and $Y$ are generalized partially Fermi isospectral,
then $[V]-[Y]=\lambda_1-\lambda_2$. {In particular, $[V]=[Y]$ if $V$ and $Y$ are  partially Fermi isospectral.}
\end{theorem}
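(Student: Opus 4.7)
The plan is to recover $[V]-[Y]=\lambda_1-\lambda_2$ by comparing the subleading $t^{Q-1}$ coefficient in an asymptotic expansion of $\det(A_w+B_V-\lambda_1 I)$, where the scaling parameter $t$ jointly scales the $w_j$ for $j\in S$.

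First, the hypothesis $P_V(k,\lambda_1)=P_Y(k,\lambda_2)$ for $k_j\in\R$ ($j\in S$) with $k_j=k_j^*$ fixed for $j\notin S$ translates, via $z_j=e^{2\pi\mathbf{i}k_j}$ and analyticity in the $z_j$, into $\mathcal{P}_V(z,\lambda_1)=\mathcal{P}_Y(z,\lambda_2)$ for all $z_j\in\C\setminus\{0\}$ ($j\in S$) with $z_j=z_j^*:=e^{2\pi\mathbf{i}k_j^*}$ ($j\notin S$). Substituting $z_j=w_j^{q_j}$ and invoking Lemma \ref{lemunitary} converts this into
\[
\det(A_w+B_V-\lambda_1 I)=\det(A_w+B_Y-\lambda_2 I)
\]
for $w_j\in\C\setminus\{0\}$ ($j\in S$) and $w_j$ a fixed $q_j$-th root of $z_j^*$ ($j\notin S$).

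Next, I substitute $w_j=tw'_j$ for $j\in S$ and expand both sides as Laurent polynomials in $t$. The diagonal entry of $A_w+B_V-\lambda I$ at index $n$ decomposes as
\[
\Delta_n=t\,P_n(w')+\bigl(C_n+[V]-\lambda\bigr)+t^{-1}P'_n(w'),
\]
with $P_n(w')=\sum_{l\in S}\rho^l_{n_l}w'_l$, $P'_n(w')=\sum_{l\in S}(\rho^l_{n_l}w'_l)^{-1}$, and $C_n=\sum_{l\notin S}(\rho^l_{n_l}w_l+(\rho^l_{n_l}w_l)^{-1})$ independent of $t$; the off-diagonal entries $\hat V(n-n')$ are $t$-free. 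Any non-identity permutation of $W$ has at most $Q-2$ fixed points, so its contribution to the determinantal expansion has $t$-degree at most $Q-2$. Consequently
\[
\det(A_w+B_V-\lambda I)=t^Q\prod_n P_n(w')+t^{Q-1}\sum_n\bigl(C_n+[V]-\lambda\bigr)\prod_{m\neq n}P_m(w')+O(t^{Q-2}),
\]
and comparing the $t^{Q-1}$ coefficients of the two sides of the determinant identity yields
\[
\bigl([V]-[Y]-\lambda_1+\lambda_2\bigr)\sum_{n\in W}\prod_{m\neq n}P_m(w')=0
\]
as an identity of polynomials in $w'_S$.

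The main obstacle is to verify that $\sum_{n\in W}\prod_{m\neq n}P_m(w')\not\equiv 0$, equivalently (after factoring out $\prod_m P_m$) that the rational function $\sum_{n\in W}1/P_n(w')$ is not identically zero. Since $P_n$ depends only on $(n_l)_{l\in S}$, one has $\sum_{n\in W}1/P_n=\bigl(\prod_{l\notin S}q_l\bigr)\sum_{a\in W_S}1/P_a(w')$ with $W_S=\prod_{l\in S}\{0,\ldots,q_l-1\}$. Here the hypothesis $\# S\geq 2$ together with the pairwise coprimality of $\{q_l\}_{l\in S}$ enters decisively: if $P_a$ and $P_{a'}$ were proportional for distinct $a,a'\in W_S$, then $\rho^l_{a_l-a'_l}$ would be independent of $l\in S$, which forces $q_l\mid(a_l-a'_l)$ for every $l\in S$ and hence $a=a'$. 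Thus the hyperplanes $\{P_a=0\}$, $a\in W_S$, are pairwise distinct, the simple poles of $\sum_a1/P_a$ do not cancel, and the sum is not identically zero. This yields $[V]-[Y]=\lambda_1-\lambda_2$, and in particular $[V]=[Y]$ in the partially Fermi isospectral case $\lambda_1=\lambda_2$.
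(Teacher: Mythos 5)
Your proof is correct and follows essentially the same route as the paper: both compare the subleading (joint degree $Q-1$) coefficient of the expansion of $\det(A+B_V-\lambda_1 I)=\det(A+B_Y-\lambda_2 I)$ in the variables indexed by $S$, cancel the $V$-independent contributions from the frozen directions, and use pairwise coprimality of the $q_j$ to show the resulting sum of reciprocals of the linear forms $\sum_{l\in S}\rho^l_{n_l}z_l$ cannot vanish identically. The only cosmetic difference is the final nonvanishing step: the paper isolates the pole at $(n_1,n_2)=(0,0)$ by taking $z_1=1$, $z_2\to-1$, while you observe that the hyperplanes $\{P_a=0\}$ are pairwise distinct so the simple poles cannot cancel --- both rest on the same coprimality fact.
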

\begin{proof} Without loss of generality, assume $S=\{1,2\}$.
By \eqref{pvzpvkfer}, \eqref{pvztilde} and \eqref{pvlambda1pylambda2iden2}, there exist $k_j^*\in\R$, $j=3,\cdots,d$, such that for any $(k_1,k_2)\in\R^2$,
$\tilde{\mathcal{P}}_V(z,\lambda_1)\equiv\tilde{\mathcal{P}}_Y(z,\lambda_2)$, where
\begin{align}
z=(z_1,z_2,z_3^*\cdots,z_d^*)=\left(e^{\frac{2\pi\mathbf{i} k_1}{q_1}},e^{\frac{2\pi\mathbf{i} k_{2}}{q_{2}}},e^{\frac{2\pi\mathbf{i} k_{3}^*}{q_{3}}},\cdots,e^{\frac{2\pi\mathbf{i} k_d^*}{q_d}}\right).\nonumber
\end{align}
Expanding $\tilde{\mathcal{P}}_V(z,\lambda_1)$ as a Laurent polynomial of $z_1,z_2$ and by Lemma \ref{lemunitary}, we obtain that
{\small\begin{align}
\tilde{\mathcal{P}}_V(z,\lambda_1)\equiv \prod_{n\in W}&\left(\rho^1_{n_1}z_1+\frac{1}{\rho^1_{n_1}z_1}+\rho^2_{n_2}z_2+\frac{1}{\rho^2_{n_2}z_2}+\sum_{i=3}^d\left(\rho^i_{n_i}z_i^*+\frac{1}{\rho^i_{n_i}z_i^*}\right)+[V]-\lambda_1\right)+(\cdots),\nonumber
\end{align}}where $(\cdots)$ consists of the terms of $z_1^{a_1}z_2^{a_2}$ with $a_1+a_2\leq Q-2$. Then we know that the terms of $z_1^{a_1}z_2^{a_2}$ with $a_1+a_2=Q-1$ equal
\begin{align}
\sum_{n\in W}\frac{\prod_{l\in W}\left(\rho^1_{l_1}z_1+ \rho^2_{l_2}z_2\right)}{\rho^1_{n_1}z_1+\rho^2_{n_2}z_2}\left(\sum_{i=3}^d\left(\rho^i_{n_i}z_i^*+\frac{1}{\rho^i_{n_i}z_i^*}\right)+[V]-\lambda_1\right).\nonumber
\end{align}
Thus by $\tilde{\mathcal{P}}_V(z,\lambda_1)\equiv\tilde{\mathcal{P}}_Y(z,\lambda_2)$ we have that for any possible $z_1\in\C,z_2\in\C$,
\begin{align}
\sum_{\substack{0\leq n_1\leq q_1-1\\0\leq n_2\leq q_2-1}}\frac{[V]-\lambda_1}{\rho^1_{n_1}z_1+\rho^2_{n_2}z_2}=\sum_{\substack{0\leq n_1\leq q_1-1\\0\leq n_2\leq q_2-1}}\frac{[Y]-\lambda_2}{\rho^1_{n_1}z_1+\rho^2_{n_2}z_2}.\label{midfervl1yl2}
\end{align}
Since $q_1$ and $q_2$ are coprime,  we have $\rho^1_{n_1}-\rho^2_{n_2}\neq 0$ for any $(n_1,n_2)\neq (0,0)$ (module periodicity). Let $z_1=1$ and $z_2\to-1$, by \eqref{midfervl1yl2} we have
\begin{align}
\frac{[V]-\lambda_1}{1+z_2}+O(1)=\frac{[Y]-\lambda_2}{1+z_2}+O(1),\nonumber
\end{align}
which implies that $[V]-[Y]=\lambda_1-\lambda_2$.
\end{proof}

\begin{lemma}Let $V$ and $Y$ be real $\Gamma$-periodic functions on $\Z^d$. If there exist $\lambda_1,\lambda_2\in\C$ and the set $S\subset\{1,2,\cdots,d\}$ with the cardinality $\# S\geq 2$, such that $V$ and $Y$ {are generalized partially Fermi isospectral},
then for all possible $z_j\in\C$, $j\in S$,
\begin{align}\label{sumv2y2}
\sum_{n\in W}\sum_{l\in W}\frac{\abs{\hat{V_1}(l)}^2}{\left(\sum_{j\in S}\rho^j_{n_j}z_j\right)\left(\sum_{j\in S}\rho^j_{n_j+l_j}z_j\right)}=\sum_{n\in W}\sum_{l\in W}\frac{\abs{\hat{Y_1}(l)}^2}{\left(\sum_{j\in S}\rho^j_{n_j}z_j\right)\left(\sum_{j\in S}\rho^j_{n_j+l_j}z_j\right)},
\end{align}
where $n=(n_1,n_2,\cdots,n_d)$ and $l=(l_1,l_2,\cdots,l_d)$, $V_1(n)=V(n)-\lambda_1$, $Y_1(n)=Y(n)-\lambda_2$.
\end{lemma}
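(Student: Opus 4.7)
The plan is to examine a higher Laurent coefficient of $\tilde{\mathcal{P}}_V(z,\lambda_1)=\tilde{\mathcal{P}}_Y(z,\lambda_2)$ than the one used in the proof of Theorem \ref{v-yequlambda1-2}. By Lemma \ref{lemunitary}, I rewrite both determinants as $\det(\tilde{A}+\tilde{B}_V)$ and $\det(\tilde{A}+\tilde{B}_Y)$, where $\tilde{A}$ is the diagonal matrix with entries $\tilde{A}(n;n)=A_z(n;n)+c_0$, setting $c_0:=[V]-\lambda_1=[Y]-\lambda_2$ (equality from Theorem \ref{v-yequlambda1-2}), while $\tilde{B}_V(n;n')=\hat{V}(n-n')$ and $\tilde{B}_V(n;n)=0$. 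Crucially, $\tilde{A}$ is identical for the two potentials. I further split $\tilde{A}(n;n)=f_n(z_S)+c_n+g_n(z_S)$ into $z_S$-homogeneous parts of degrees $+1$, $0$ and $-1$, where $f_n(z_S)=\sum_{j\in S}\rho_{n_j}^j z_j$, $g_n(z_S)=\sum_{j\in S}1/(\rho_{n_j}^j z_j)$, and $c_n$ absorbs $c_0$ together with the frozen contributions from indices $j\notin S$.

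I then expand $\det(\tilde{A}+\tilde{B}_V)$ by the permutation formula, organizing terms by the number $k$ of non-fixed points of $\sigma$ (equivalently, the degree in $\tilde{B}_V$). The identity permutation ($k=0$) gives $\prod_n\tilde{A}(n;n)$; transpositions ($k=2$) give $-\sum_{\{n,n'\}}\tilde{B}_V(n;n')\tilde{B}_V(n';n)\prod_{m\neq n,n'}\tilde{A}(m;m)$, which for real $V$ simplifies to $-\sum_{\{n,n'\}}\abs{\hat{V}(n-n')}^2\prod_{m\neq n,n'}\tilde{A}(m;m)$ via $\overline{\hat{V}(l)}=\hat{V}(-l)$; and permutations with $k\geq 3$ contain products of at most $Q-3$ $\tilde{A}$-factors, so have total $z_S$-degree at most $Q-3$. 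Next, I extract the homogeneous degree-$(Q-2)$ piece under $z_S\mapsto tz_S$. A count of nonnegative integer solutions to $p-m=Q-2$ and $p+z+m=Q$ (counting factors of $f_n$, $c_n$, $g_n$) isolates exactly two shapes for the $\tilde{B}_V$-free contribution: Case A ($p=Q-2,\,z=2,\,m=0$) gives $\sum_{\{k_1,k_2\}}c_{k_1}c_{k_2}\prod_{n\neq k_1,k_2}f_n(z_S)$, and Case B ($p=Q-1,\,z=0,\,m=1$) gives $\sum_k g_k(z_S)\prod_{n\neq k}f_n(z_S)$; the quadratic-$\tilde{B}_V$ contribution at this degree is $-\sum_{\{n,n'\}}\abs{\hat{V}(n-n')}^2\prod_{m\neq n,n'}f_m(z_S)$.

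Because $c_n$ and $g_n$ depend only on $c_0$, the $\rho$'s, and the frozen $z_j^*$ for $j\notin S$, Cases A and B are identical for $V$ and $Y$. Equating the degree-$(Q-2)$ pieces and cancelling the common parts yields the polynomial identity $\sum_{\{n,n'\}}\abs{\hat{V}(n-n')}^2\prod_{m\neq n,n'}f_m(z_S)=\sum_{\{n,n'\}}\abs{\hat{Y}(n-n')}^2\prod_{m\neq n,n'}f_m(z_S)$. Dividing both sides by $\prod_{l\in W}f_l(z_S)$, converting the unordered sum to an ordered one (the factor $\tfrac12$ appearing symmetrically on both sides), and substituting $l=n'-n\pmod\Gamma$ -- a bijection from $\{(n,n'):n\neq n'\}$ to $\{(n,l):l\neq 0\}$ -- produces the $l\neq 0$ part of \eqref{sumv2y2}. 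The $l=0$ summands on the two sides of \eqref{sumv2y2} agree trivially because $\abs{\hat{V}_1(0)}=\abs{c_0}=\abs{\hat{Y}_1(0)}$, completing the identity.

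The main obstacle is the degree-$(Q-2)$ bookkeeping: verifying that Cases A and B exhaust the $\tilde{B}_V$-free contribution at this degree and that they match exactly between $V$ and $Y$ so that they cancel, while confirming that all cubic-and-higher $\tilde{B}_V$-contributions are of strictly lower $z_S$-degree. Once this accounting is in place, the reduction to \eqref{sumv2y2} is a routine algebraic manipulation.
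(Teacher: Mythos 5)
Your proposal is correct and follows essentially the same route as the paper: expand $\tilde{\mathcal{P}}$ via Lemma \ref{lemunitary} into the diagonal product plus Hermitian off-diagonal contributions, use Theorem \ref{v-yequlambda1-2} to make the diagonal part identical for $V$ and $Y$, compare the degree-$(Q-2)$ terms in the $S$-variables, and finish with the change of variables $l=n'-n$ and periodicity. The only difference is cosmetic bookkeeping (explicit permutation expansion and homogeneous-degree extraction with your Cases A and B, rather than the paper's subtraction of the full diagonal product before comparing highest terms).
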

\begin{proof}Without loss of generality, we assume that $S=\{1,2,\cdots,d'\}$ with $2\leq d'\leq d$.
One may readily verify that
\begin{align}
\hat{V}_1(l)=
\hat{V}{(l)}-\lambda_1\delta_{(l;\mathbf{0})},\   \hat{Y}_1(l)=
\hat{Y}{(l)}-\lambda_2\delta_{(l;\mathbf{0})},\label{v1vy1y}
\end{align}
where $\delta_{(l;\mathbf{0})}=1$ if $l=\mathbf{0}$ and $\delta_{(l;\mathbf{0})}=0$ if $l\neq\mathbf{0}$.
By \eqref{pvzpvkfer}, \eqref{pvztilde} and \eqref{pvlambda1pylambda2iden2}, there exist $k_j^*\in\R$, $j=d'+1,\cdots,d$, such that for any $(k_1,k_2,\cdots,k_{d'})\in\R^{d'}$,
$\tilde{\mathcal{P}}_V(z,\lambda_1)\equiv\tilde{\mathcal{P}}_Y(z,\lambda_2)$, where
\begin{align}
z=(z_1,\cdots,z_{d'},z_{d'+1}^*,\cdots,z_d^*)=\left(e^{\frac{2\pi\mathbf{i} k_1}{q_1}},\cdots,e^{\frac{2\pi\mathbf{i} k_{d'}}{q_{d'}}},e^{\frac{2\pi\mathbf{i} k_{d'+1}^*}{q_{d'+1}}},\cdots,e^{\frac{2\pi\mathbf{i} k_d^*}{q_d}}\right).\nonumber
\end{align}
Let $\hat{z}=(z_1,z_2,\cdots,z_{d'})$ and $h(\hat{z})=\prod_{n\in W}\left(\sum_{j=1}^{d'}\rho^j_{n_j}z_j\right)$. Since $V$ and $Y$ are real, by Lemma \ref{lemunitary} we know that $B_V$ and $B_Y$ are Hermitian.
Expanding $\tilde{\mathcal{P}}_V(z,\lambda_1)$ as a Laurent polynomial of $z_1,z_2,\cdots,z_{d'}$, by Lemma \ref{lemunitary} we obtain that
\begin{align}
\tilde{\mathcal{P}}_V(z,\lambda_1)
=&\prod_{n\in W}\left([V]-\lambda_1+\sum_{j=1}^{d'}\left(\rho^j_{n_j}z_j+\frac{1}{\rho^j_{n_j}z_j}\right)+\sum_{j=d'+1}^{d}\left(\rho^j_{n_j}z_j^*+\frac{1}{\rho^j_{n_j}z_j^*}\right)\right)\nonumber\\
&-\frac{1}{2}\sum_{\substack{n\in W,n'\in W\\n\neq n'}}\frac{h(\hat{z})}{\left(\sum_{j=1}^{d'}\rho^j_{n_j}z_j\right)\left(\sum_{j=1}^{d'}\rho^j_{n'_j}z_j\right)}\left|{\hat{V}(n-n')}\right|^2+(\cdots),\nonumber
\end{align}
where $n'=(n_1',n_2',\cdots,n_d')$ and $(\cdots)$ consists of terms of $z_1^{a_1}z_2^{a_2}\cdots z_{d'}^{a_{d'}}$ with $\sum_{j=1}^{d'}a_j\leq Q-3$. It follows from Theorem \ref{v-yequlambda1-2} that $[V]-\lambda_1=[Y]-\lambda_2$.
Hence,
\begin{align}
\tilde{\mathcal{P}}_V&(z,\lambda_1)-\prod_{n\in W}\left([V]-\lambda_1+\sum_{j=1}^{d'}\left(\rho^j_{n_j}z_j+\frac{1}{\rho^j_{n_j}z_j}\right)+\sum_{j=d'+1}^{d}\left(\rho^j_{n_j}z_j^*+\frac{1}{\rho^j_{n_j}z_j^*}\right)\right)\nonumber\\
\equiv &\tilde{\mathcal{P}}_Y(z,\lambda_2)-\prod_{n\in W}\left([Y]-\lambda_2+\sum_{j=1}^{d'}\left(\rho^j_{n_j}z_j+\frac{1}{\rho^j_{n_j}z_j}\right)+\sum_{j=d'+1}^{d}\left(\rho^j_{n_j}z_j^*+\frac{1}{\rho^j_{n_j}z_j^*}\right)\right).\nonumber
\end{align}
Comparing the highest terms of the equality, namely $z_1^{a_1}z_2^{a_2}\cdots z_{d'}^{a_{d'}}$ with $\sum_{j=1}^{d'}a_j= Q-2$, one can obtain that for any $\hat{z}\in\mathbb{C}^{d'}$,
\begin{align}
\sum_{\substack{n \in W, n' \in W \\ n \neq n'}} \frac{h(\hat{z}) \left| \hat{V}(n - n') \right|^2}{\left(\sum_{j=1}^{d'}\rho^j_{n_j}z_j\right)\left(\sum_{j=1}^{d'}\rho^j_{n'_j}z_j\right)} = \sum_{\substack{n \in W, n' \in W \\ n \neq n'}} \frac{h(\hat{z}) \left| \hat{Y}(n - n') \right|^2}{\left(\sum_{j=1}^{d'}\rho^j_{n_j}z_j\right)\left(\sum_{j=1}^{d'}\rho^j_{n'_j}z_j\right)}.\nonumber
\end{align}
Due to \eqref{v1vy1y} and the fact $[V]-\lambda_1=[Y]-\lambda_2$, we have for all possible $\hat{z}\in \C^{d'}$,
\begin{align}
\sum_{\substack{n \in W, n' \in W}} \frac{\left| \hat{V_1}(n - n') \right|^2}{\left(\sum_{j=1}^{d'}\rho^j_{n_j}z_j\right)\left(\sum_{j=1}^{d'}\rho^j_{n'_j}z_j\right)} = \sum_{\substack{n \in W, n' \in W}} \frac{\left| \hat{Y_1}(n - n') \right|^2}{\left(\sum_{j=1}^{d'}\rho^j_{n_j}z_j\right)\left(\sum_{j=1}^{d'}\rho^j_{n'_j}z_j\right)}.\nonumber
\end{align}
Changing variables with $l=n-n'$, we conclude that
\begin{align}
\sum_{n\in W}&\sum_{l\in\{-n\}+W}\frac{\left|{\hat{V_1}(l)}\right|^2}{\left(\sum_{j=1}^{d'}\rho^j_{n_j}z_j\right)\left(\sum_{j=1}^{d'}\rho^j_{n_j+l_j}z_j\right)}\nonumber\\
&=\sum_{n\in W}\sum_{l\in\{-n\}+W}\frac{\left|{\hat{Y_1}(l)}\right|^2}{\left(\sum_{j=1}^{d'}\rho^j_{n_j}z_j\right)\left(\sum_{j=1}^{d'}\rho^j_{n_j+l_j}z_j\right)},\nonumber
\end{align}
where $\{n\}+W$ denotes the set $\{n+n'\}_{n'\in W}$. For each fixed $n\in W$, since
\begin{align}
\frac{\left|{\hat{V_1}(l)}\right|^2}{\left(\sum_{j=1}^{d'}\rho^j_{n_j}z_j\right)\left(\sum_{j=1}^{d'}\rho^j_{n_j+l_j}z_j\right)}\ \ \mathrm{and}\ \ \frac{\left|{\hat{Y_1}(l)}\right|^2}{\left(\sum_{j=1}^{d'}\rho^j_{n_j}z_j\right)\left(\sum_{j=1}^{d'}\rho^j_{n_j+l_j}z_j\right)}\nonumber
\end{align}
are $\Gamma$-periodic in $l$,  we can obtain \eqref{sumv2y2}.
\end{proof}

\begin{lemma}\cite{liucpam}\label{lemcoprimeliucpam}
	Assume that $q_1,q_2,q_3$ are pairwise coprime and
	\begin{align}
		\begin{vmatrix}
			1 & 1 & 1 \\
			\\
			\rho^1_{l_1} & \rho^2_{l_2} &\rho^3_{l_3}\\
			\\
			\rho^1_{l_1'} & \rho^2_{l_2'} &\rho^3_{l_3'}
		\end{vmatrix}= 0\quad {\rm with}\quad l_j,l_j'\in\{0,1,\cdots,q_j-1\},j=1,2,3.\nonumber
	\end{align}
	Then $(l_1,l_2,l_3)$ and $(l_1',l_2',l_3')$ must fall into one of the following cases:
	\[(a)~l_1=l_2=l_3=0;~(b)~l_1'=l_2'=l_3'=0;~(c)~l_1=l_1',l_2=l_2',l_3=l_3';\]\[(d)~l_1=l_1'=0,l_2=l_2'=0;~(e)~l_1=l_1'=0,l_3=l_3'=0;~(f)~l_2=l_2'=0,l_3=l_3'=0.\]
\end{lemma}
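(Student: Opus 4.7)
The plan is to expand the $3 \times 3$ determinant to
\[ a_1(b_2 - b_3) + a_2(b_3 - b_1) + a_3(b_1 - b_2) = 0, \qquad (\star) \]
where $a_j := \rho^j_{l_j}$ and $b_j := \rho^j_{l_j'}$, and then carry out a case analysis on the coincidence pattern of the $a_j$'s and $b_j$'s. The key preliminary fact is the \emph{coprimality lemma}: for $j \neq k$, an equality $\rho^j_{l_j} = \rho^k_{l_k}$ forces $l_j = l_k = 0$, because the common value is a root of unity whose order divides $\gcd(q_j, q_k) = 1$ and is therefore $1$.

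I would first split by which of the $a_j$'s coincide. If all three agree, the coprimality lemma yields $l_1 = l_2 = l_3 = 0$, i.e.\ case (a). If exactly two of the $a_j$'s agree, say $a_i = a_j \neq a_k$, then $l_i = l_j = 0$, $a_i = a_j = 1$, and substituting into $(\star)$ reduces it to $(1 - a_k)(b_i - b_j) = 0$; since $a_k \neq 1$, this forces $b_i = b_j$ and hence $l_i' = l_j' = 0$, producing case (d), (e) or (f) according to the pair $(i, j)$. If the $a_j$'s are pairwise distinct, the analogous dichotomy applied to the $b_j$'s gives case (b) when all three $b_j$'s agree, while exactly two agreeing yields $(a_i - a_j)(1 - b_k) = 0$ with $a_i \neq a_j$, which is a contradiction.

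The remaining main case is that both triples $(a_1, a_2, a_3)$ and $(b_1, b_2, b_3)$ consist of three distinct values, and the goal is to deduce case (c), i.e., $l_j = l_j'$ for $j = 1, 2, 3$. Using the CRT identification $\Z/Q \cong \Z/q_1 \times \Z/q_2 \times \Z/q_3$ with $Q := q_1 q_2 q_3$, each of the six monomials $\pm a_i b_j$ appearing in $(\star)$ corresponds to a coordinate triple in $\Z/q_1 \times \Z/q_2 \times \Z/q_3$ with exactly one zero entry. The distinctness hypothesis (at most one of $l_1, l_2, l_3$ and at most one of $l_1', l_2', l_3'$ may vanish, by the coprimality lemma) eliminates six of the nine possible sign-reversing coincidences among these six triples and leaves only $a_1 b_2 = a_2 b_1$, $a_2 b_3 = a_3 b_2$, and $a_3 b_1 = a_1 b_3$, equivalent respectively to the conditions ``$l_1 = l_1'$ and $l_2 = l_2'$'', ``$l_2 = l_2'$ and $l_3 = l_3'$'', and ``$l_1 = l_1'$ and $l_3 = l_3'$''. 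A short substitution into $(\star)$ then shows that if any one of these three cancellations holds, a nonzero factor of the form $a_i - a_j$ (coming from distinctness) forces the other two to hold as well, yielding case (c).

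The delicate step, which I expect to be the main obstacle, is to rule out the subcase where none of the three cancellations occurs. Here I would regroup $(\star)$ as $X + Y + Z = 0$ with $X := a_1 b_2 - a_2 b_1 \in \mathbb{Q}(\zeta_{q_1 q_2})$, $Y := a_2 b_3 - a_3 b_2 \in \mathbb{Q}(\zeta_{q_2 q_3})$, and $Z := a_3 b_1 - a_1 b_3 \in \mathbb{Q}(\zeta_{q_1 q_3})$, and apply Galois automorphisms of $\mathrm{Gal}(\mathbb{Q}(\zeta_Q)/\mathbb{Q})$ that act nontrivially on exactly one of the $\zeta_{q_i}$'s. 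Combined with the linear disjointness of $\mathbb{Q}(\zeta_{q_1}), \mathbb{Q}(\zeta_{q_2}), \mathbb{Q}(\zeta_{q_3})$ over $\mathbb{Q}$ (a consequence of pairwise coprimality of the $q_j$'s), this forces each of $X$, $Y$, $Z$ into the $\mathbb{Q}$-subspace $\mathbb{Q}(\zeta_{q_i}) + \mathbb{Q}(\zeta_{q_j})$ of $\mathbb{Q}(\zeta_{q_i q_j})$. Unraveling this constraint on $X$ (for instance) via the cyclotomic relations among $\zeta_{q_1}^{l_1}, \zeta_{q_1}^{l_1'}, \zeta_{q_2}^{l_2}, \zeta_{q_2}^{l_2'}$ then yields either the cancellation condition $l_1 = l_1'$ and $l_2 = l_2'$ (violating the no-cancellation assumption) or a double-zero condition such as $l_1 = l_1' = 0$ or $l_2 = l_2' = 0$; iterating the argument for $Y$ and $Z$ and invoking the coprimality lemma contradicts the distinctness of the $a_j$'s or $b_j$'s, completing the proof.
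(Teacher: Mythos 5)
This paper does not actually prove the lemma — it is quoted verbatim from \cite{liucpam} — so your attempt can only be judged on its own merits. Up to the final subcase it is fine: the expansion $(\star)$, the coprimality observation that $\rho^j_{l_j}=\rho^k_{l_k}$ ($j\neq k$) forces $l_j=l_k=0$, the treatment of all configurations in which two of the $a_j$'s or two of the $b_j$'s coincide, and the reduction of the ``both triples pairwise distinct'' case to the three admissible cancellations $a_1b_2=a_2b_1$, $a_2b_3=a_3b_2$, $a_3b_1=a_1b_3$ (any one of which propagates to case (c)) are all correct.

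The gap is the remaining subcase (both triples distinct, none of the three cancellations), which is the heart of the lemma and which you only sketch. The averaging step itself can be made rigorous: applying $\frac{1}{\phi(q_3)}\sum_{\sigma}\sigma$ over $\sigma\in\mathrm{Gal}(\mathbb{Q}(\zeta_{Q})/\mathbb{Q}(\zeta_{q_1q_2}))$ to $X+Y+Z=0$ gives $X=\beta(a_1-a_2)+\alpha(b_2-b_1)$ with $\alpha,\beta\in\mathbb{Q}$, hence $X\in\mathbb{Q}(\zeta_{q_1})+\mathbb{Q}(\zeta_{q_2})$. But the asserted ``unraveling'' — that this membership forces either $l_1=l_1',\,l_2=l_2'$ or a double-zero condition — is false as stated: if $q_1=2$ (which the hypotheses allow), then $\mathbb{Q}(\zeta_{q_1})=\mathbb{Q}$ and $X=\pm\bigl(\zeta_{q_2}^{l_2}+\zeta_{q_2}^{l_2'}\bigr)\in\mathbb{Q}(\zeta_{q_2})$ for \emph{every} $l_1\neq l_1'$ and all $l_2,l_2'$, so the constraint you extract from $X$ is vacuous; similar degeneracies occur whenever short rational relations among roots of unity exist (e.g.\ $\zeta_3+\zeta_3^2=-1$). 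Thus the concluding contradiction is not established, and the route would need substantially more case analysis (or retention of the specific rational coefficients $\alpha,\beta$, which your sketch discards). A way to close this case without any cyclotomic bookkeeping: the vanishing determinant says the three points $(a_j,b_j)$ lie on one affine complex line; since the $a_j$ are distinct and the $b_j$ are distinct, the line is $y=\alpha x+\beta$ with $\alpha\neq 0$, and intersecting it with $\{|x|=|y|=1\}$ gives $\alpha\bar\beta x^2+(|\alpha|^2+|\beta|^2-1)x+\bar\alpha\beta=0$, which admits the three distinct unit-modulus roots $a_1,a_2,a_3$ only if it vanishes identically, forcing $\beta=0$. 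Then $b_j=\alpha a_j$ for $j=1,2,3$, so $\alpha=b_j/a_j$ is simultaneously a $q_1$-th, $q_2$-th and $q_3$-th root of unity, hence $\alpha=1$ by pairwise coprimality, i.e.\ $l_j=l_j'$ for all $j$, which is case (c).
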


{\begin{theorem}\label{thmferst}
Let $V$ and $Y$ be real $\Gamma$-periodic functions on $\Z^d$.  If there exist $\lambda_1,\lambda_2\in\C$  and the set $S\subset\{1,2,\cdots,d\}$ with the cardinality $\# S\geq 3$, such that $V$ and $Y$ are generalized partially Fermi isospectral,
then for any $s,t\in S$ with $s<t$, if $V$ is $(s,t)$-separable, $Y$ is also $(s,t)$-separable.
\end{theorem}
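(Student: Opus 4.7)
By the remark following Definition \ref{defferisotrans}, we may replace $S$ by any three-element subset containing $\{s,t\}$; after relabelling, assume $S = \{1,2,3\}$ with $s=1$, $t=2$. Put $W_3=\prod_{j=1}^{3}\{0,1,\ldots,q_j-1\}$ and $L_{\bar{n}}(z)=\sum_{j=1}^3\rho^j_{n_j}z_j$ for $\bar{n}\in W_3$, $z\in\C^3$. Splitting each index as $n=(\bar{n},\tilde{n})$, $l=(\bar{l},\tilde{l})$ with $\bar{n},\bar{l}\in W_3$, the denominators in \eqref{sumv2y2} depend only on $\bar{n}$ and $\bar{l}$. Taking the difference of the two sides of \eqref{sumv2y2}, summing over the (irrelevant) $\tilde{n}$, and collecting $\tilde{l}$ gives
\begin{equation*}
\sum_{\bar{n}\in W_3}\sum_{\bar{l}\in W_3}\frac{C(\bar{l})}{L_{\bar{n}}(z)L_{\bar{n}+\bar{l}}(z)}=0,\qquad z\in\C^3,
\end{equation*}
where $C(\bar{l})=\sum_{\tilde{l}}\bigl(|\hat{V_1}(\bar{l},\tilde{l})|^2-|\hat{Y_1}(\bar{l},\tilde{l})|^2\bigr)$. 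Reality of $V,Y$ yields $|\hat{V_1}(-l)|=|\hat{V_1}(l)|$ and similarly for $\hat{Y_1}$, so $C(-\bar{l})=C(\bar{l})$.

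The crux is a residue extraction along codimension-two subvarieties. Fix $\bar{n}_0\neq\bar{m}_0$ in $W_3$ with $\bar{l}_0:=\bar{m}_0-\bar{n}_0$ having at least two nonzero components, and set $Z_{\bar{n}_0,\bar{m}_0}=\{L_{\bar{n}_0}=L_{\bar{m}_0}=0\}$. Multiplying the above identity by $L_{\bar{n}_0}(z)L_{\bar{m}_0}(z)$ produces a rational function that is still identically zero. Evaluate at a generic point $z_\ast\in Z_{\bar{n}_0,\bar{m}_0}$---i.e.~one at which $L_{\bar{k}}(z_\ast)\neq 0$ for every $\bar{k}\notin\{\bar{n}_0,\bar{m}_0\}$. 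Only those summands survive for which both $L_{\bar{n}}$ and $L_{\bar{n}+\bar{l}}$ vanish on $Z_{\bar{n}_0,\bar{m}_0}$, equivalently for which the associated vectors $v_{\bar{n}},v_{\bar{n}+\bar{l}}\in\{v_{\bar{n}_0},v_{\bar{m}_0}\}$, where $v_{\bar{n}}=(\rho^1_{n_1},\rho^2_{n_2},\rho^3_{n_3})$. For any $\bar{k}\neq\bar{n}_0,\bar{m}_0$, having $v_{\bar{k}}$ in the span of $v_{\bar{n}_0},v_{\bar{m}_0}$ is equivalent, after factoring out $\mathrm{diag}(\rho^j_{n_{0,j}})$, to the vanishing of the determinant in Lemma \ref{lemcoprimeliucpam} with $l=\bar{l}_0$ and $l'=\bar{k}-\bar{n}_0$. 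Because $\bar{l}_0$ has at least two nonzero entries, cases (d),(e),(f) of that lemma are impossible, and cases (a),(b),(c) force $\bar{k}\in\{\bar{n}_0,\bar{m}_0\}$. Hence only two summands remain---namely $(\bar{n},\bar{l})=(\bar{n}_0,\bar{l}_0)$ and $(\bar{m}_0,-\bar{l}_0)$---giving $C(\bar{l}_0)+C(-\bar{l}_0)=2C(\bar{l}_0)=0$.

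To conclude, every $\bar{l}=(l_1,l_2,l_3)\in W_3$ with $l_1,l_2\neq 0$ has at least two nonzero coordinates, so $C(\bar{l})=0$ by the above. Combining with Lemma \ref{stseparablefermi}, the $(1,2)$-separability of $V$ forces $\hat{V}(\bar{l},\tilde{l})=\hat{V_1}(\bar{l},\tilde{l})=0$ for all $\tilde{l}$ (noting $(\bar{l},\tilde{l})\neq\mathbf{0}$), so $C(\bar{l})=-\sum_{\tilde{l}}|\hat{Y_1}(\bar{l},\tilde{l})|^2$. Since $C(\bar{l})=0$ and each summand is non-negative, $\hat{Y_1}(\bar{l},\tilde{l})=\hat{Y}(\bar{l},\tilde{l})=0$ for every $\tilde{l}$, and Lemma \ref{stseparablefermi} in the reverse direction yields the $(1,2)$-separability of $Y$. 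The main obstacle is the cleanness of the residue calculation: it rests squarely on Lemma \ref{lemcoprimeliucpam}, which is precisely where the assumption $\#S\geq 3$ and the pairwise coprimality of the $q_j$'s are indispensable.
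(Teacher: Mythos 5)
Your strategy coincides with the paper's: reduce to $S=\{1,2,3\}$, rewrite the difference of the two sides of \eqref{sumv2y2} as $\sum_{\bar{n},\bar{l}}C(\bar{l})/(L_{\bar{n}}(z)L_{\bar{n}+\bar{l}}(z))=0$, and isolate coefficients by a double-pole extraction at the intersection of two of the planes, with Lemma \ref{lemcoprimeliucpam} guaranteeing that no third plane passes through a generic point of that intersection. However, there is a genuine gap in your count of surviving summands. The diagonal terms $(\bar{n},\bar{l})=(\bar{n}_0,\mathbf{0})$ and $(\bar{m}_0,\mathbf{0})$ also have \emph{both} denominator factors vanishing on $Z_{\bar{n}_0,\bar{m}_0}$ --- they satisfy your own survival criterion $v_{\bar{n}},v_{\bar{n}+\bar{l}}\in\{v_{\bar{n}_0},v_{\bar{m}_0}\}$ --- yet you drop them. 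Worse, after multiplying by $L_{\bar{n}_0}L_{\bar{m}_0}$ these terms become $C(\mathbf{0})\,L_{\bar{m}_0}/L_{\bar{n}_0}$ and $C(\mathbf{0})\,L_{\bar{n}_0}/L_{\bar{m}_0}$, which do not extend continuously to $z_\ast$: approaching along $z_\ast+\epsilon v$ they tend to $C(\mathbf{0})\,t$ and $C(\mathbf{0})/t$ with $t=(v\cdot v_{\bar{m}_0})/(v\cdot v_{\bar{n}_0})$ direction-dependent. So ``evaluate at a generic point of $Z_{\bar{n}_0,\bar{m}_0}$'' is not well-defined for your expression, and what your limit actually yields is $C(\bar{l}_0)+C(-\bar{l}_0)+C(\mathbf{0})(t+t^{-1})=0$, not $2C(\bar{l}_0)=0$.

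The repair is exactly the preliminary step in the paper's proof that you skipped: first let $z$ approach a point where only the single plane $z_1+z_2+z_3=0$ vanishes (possible since pairwise coprimality makes the planes pairwise non-parallel) and compare double poles there; this gives $\sum_{\bar{l}'=\mathbf{0}}|\hat{V}_1(l')|^2=\sum_{\bar{l}'=\mathbf{0}}|\hat{Y}_1(l')|^2$, i.e.\ $C(\mathbf{0})=0$, after which the $\bar{l}\equiv\mathbf{0}$ terms can be subtracted before the two-plane extraction (this is the passage from \eqref{midj123} through \eqref{midsquare} to \eqref{midlneq0}). Alternatively, keep your computation but vary the direction of approach: since $t+t^{-1}$ is nonconstant in $t$, the identity above forces both $C(\mathbf{0})=0$ and $C(\bar{l}_0)+C(-\bar{l}_0)=0$. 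With either fix, your ending is fine (your symmetry $C(-\bar{l})=C(\bar{l})$ is a correct alternative to the paper's use of nonnegativity of both groups of terms in \eqref{sumv1y1finalfermi}). Two minor points you should also make explicit: $v_{\bar{n}_0}$ and $v_{\bar{m}_0}$ are linearly independent whenever $\bar{l}_0\neq\mathbf{0}$ (again by coprimality), so $Z_{\bar{n}_0,\bar{m}_0}$ really is a line and a point $z_\ast$ avoiding all other planes exists; and the evaluation must be phrased as a limit along a generic direction, since the multiplied identity only holds off the union of the planes.
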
}
\begin{proof}
Without loss of generality, we assume that $S=\{1,2,3\}$ and $(s,t)=(1,2)$.
By \eqref{sumv2y2} one has for any possible $z_j$, $j=1,2,3$,
\begin{align}
\sum_{\substack{0\leq n_i\leq q_i-1\\ i=1,2,3}}&\sum_{l'\in W}\frac{\abs{\hat{V_1}(l')}^2}{\left(\sum_{j=1}^3\rho^j_{n_j}z_j\right)\left(\sum_{j=1}^3\rho^j_{n_j+l_j'}z_j\right)}\nonumber\\
=&\sum_{\substack{0\leq n_i\leq q_i-1\\ i=1,2,3}}\sum_{l'\in W}\frac{\abs{\hat{Y_1}(l')}^2}{\left(\sum_{j=1}^3\rho^j_{n_j}z_j\right)\left(\sum_{j=1}^3\rho^j_{n_j+l_j'}z_j\right)},\label{midj123}
\end{align}
where $l'=(l_1',l_2',\cdots,l_d')$, $V_1(n)=V(n)-\lambda_1$ and $Y_1(n)=Y(n)-\lambda_2$.

Since $q_j$, $j=1,2,3$ are pairwise coprime, we have that for any $(n_1,n_2,n_3)\neq \mathbf{0}$ (module periodicity), the two planes
\begin{align}
\sum_{j=1}^{{3}}z_j=0 \ \ \mathrm{and}\ \  \sum_{j=1}^{{3}}\rho^j_{n_j}z_j=0\nonumber
\end{align}
are not parallel. Then there exists  $\hat{z}^*=(z_1^*,z_2^*,z_{3}^*)$ such that $z_1^*+z_2^*+z_3^*=0$
and for any $(n_1,n_2,n_3)\neq \mathbf{0}$ (module periodicity),
$
\sum_{j=1}^{{3}}\rho^j_{n_j}z_j^*\neq 0.
$
Let $z_2=z_2^*$, $z_3=z_3^*$, and $z_1\to z_1^*$, by \eqref{midj123} one obtains
\begin{align}
\sum_{\substack{l'\in W\\ l_1'=l_2'=l_3'=0}}\frac{\abs{\hat{V}_1(l')}^2}{(z_1-z_1^*)^2}+\frac{O(1)}{\abs{z_1-z_1^*}}=\sum_{\substack{l'\in W\\ l_1'=l_2'=l_3'=0}}\frac{\abs{\hat{Y}_1(l')}^2}{(z_1-z_1^*)^2}+\frac{O(1)}{\abs{z_1-z_1^*}},\nonumber
\end{align}
which implies that
\begin{align}
\sum_{\substack{l'\in W\\ l_1'=l_2'=l_3'=0}}\abs{\hat{V}_1(l')}^2=\sum_{\substack{l'\in W\\ l_1'=l_2'=l_3'=0}}\abs{\hat{Y}_1(l')}^2.\nonumber
\end{align}
Thus for any fixed $n_1,n_2,n_3$, and possible $z_j,j=1,2,3,$
\begin{align}
\sum_{\substack{l'\in W\\l_1'=l_2'=l_3'=0}}\frac{\abs{\hat{V_1}(l')}^2}{\left(\sum_{j=1}^3\rho^j_{n_j}z_j\right)^2}=\sum_{\substack{l'\in W\\l_1'=l_2'=l_3'=0}}\frac{\abs{\hat{Y_1}(l')}^2}{\left(\sum_{j=1}^3\rho^j_{n_j}z_j\right)^2},\nonumber
\end{align}
then we have
\begin{align}
\sum_{\substack{0\leq n_i\leq q_i-1\\i=1,2,3}}\sum_{\substack{l'\in W\\l_1'=l_2'=l_3'=0}}\frac{\abs{\hat{V_1}(l')}^2}{\left(\sum_{j=1}^3\rho^j_{n_j}z_j\right)^2}=\sum_{\substack{0\leq n_i\leq q_i-1\\i=1,2,3}}\sum_{\substack{l'\in W\\l_1'=l_2'=l_3'=0}}\frac{\abs{\hat{Y_1}(l')}^2}{\left(\sum_{j=1}^3\rho^j_{n_j}z_j\right)^2}.\label{midsquare}
\end{align}
It follows from \eqref{midj123} and \eqref{midsquare} that
\begin{align}
\sum_{\substack{0\leq n_i\leq q_i-1\\ i=1,2,3}}\sum_{\substack{l'\in W\\(l_1',l_2',l_3')\neq \mathbf{0}}}&\frac{\abs{\hat{V_1}(l')}^2}{\left(\sum_{j=1}^3\rho^j_{n_j}z_j\right)\left(\sum_{j=1}^3\rho^j_{n_j+l_j'}z_j\right)}\nonumber\\
&=\sum_{\substack{0\leq n_i\leq q_i-1\\ i=1,2,3}}\sum_{\substack{l'\in W\\(l_1',l_2',l_3')\neq \mathbf{0}}}\frac{\abs{\hat{Y_1}(l')}^2}{\left(\sum_{j=1}^3\rho^j_{n_j}z_j\right)\left(\sum_{j=1}^3\rho^j_{n_j+l_j'}z_j\right)}.\label{midlneq0}
\end{align}

Let $l_1\in \{1,2,\cdots,q_1-1\}, l_2\in\{1,2,\cdots,q_2-1\}$ and $l_3\in\{0,1,\cdots,q_3-1\}$ be fixed. By Lemma \ref{lemcoprimeliucpam}, for any $l_j'\in \{0,1,\cdots,q_j-1\},j=1,2,3$ with $(l_1',l_2',l_3')\neq \mathbf{0}$ and $(l_1',l_2',l_3')\neq (l_1,l_2,l_3)$, one has that
\begin{align}
\begin{vmatrix}
1 & 1 & 1 \\
\\
\rho^1_{l_1} & \rho^2_{l_2} &\rho^3_{l_3}\\
\\
\rho^1_{l_1'} & \rho^2_{l_2'} &\rho^3_{l_3'}
\end{vmatrix}\neq 0.\nonumber
\end{align}
Therefore, there exists $\hat{z}^{\#}=(z_1^{\#},z_2^{\#},z_3^{\#})$ such that
\begin{align}
z_1^{\#}+z_2^{\#}+z_3^{\#}=\rho^1_{l_1}z_1^{\#}+\rho^2_{l_2}z_2^{\#}+\rho^3_{l_3}z_3^{\#}=0,\label{planesfermi0}
\end{align}
and for any $(l_1',l_2',l_3')\neq \mathbf{0}$ and $(l_1',l_2',l_3')\neq (l_1,l_2,l_3)$ (modulo periodicity),
\begin{align}
\rho^1_{l_1'}z_1^{\#}+\rho^2_{l_2'}z_2^{\#}+\rho^3_{l_3'}z_3^{\#}\neq0.\label{planesfermineq0}
\end{align}
Let $z_2=z_2^{\#},z_3=z_3^{\#}$ and $z_1\to z_1^{\#}$, by \eqref{midlneq0}, \eqref{planesfermi0} and \eqref{planesfermineq0} we have
\begin{align}
\Bigg(\sum_{\substack{l'\in W\\ (l_1',l_2',l_3')=(l_1,l_2,l_3)}}&+\sum_{\substack{l'\in W\\ (l_1',l_2',l_3')=(q_1-l_1,q_2-l_2,q_3-l_3)}}\Bigg)\frac{\abs{\hat{V}_1(l')}^2}{\rho^1_{l_1}(z_1-z_1^{\#})^2}+\frac{O(1)}{\abs{z_1-z_1^{\#}}}\nonumber\\
=&\Bigg(\sum_{\substack{l'\in W\\ (l_1',l_2',l_3')=(l_1,l_2,l_3)}}+\sum_{\substack{l'\in W\\ (l_1',l_2',l_3')=(q_1-l_1,q_2-l_2,q_3-l_3)}}\Bigg)\frac{\abs{\hat{Y}_1(l')}^2}{\rho^1_{l_1}(z_1-z_1^{\#})^2}+\frac{O(1)}{\abs{z_1-z_1^{\#}}},\nonumber
\end{align}
from which we obtain that for any $l_1\in \{1,2,\cdots,q_1-1\}$, $l_2\in\{1,2,\cdots,q_2-1\}$ and any $l_3\in\{0,1,\cdots,q_3-1\}$,
\begin{align}
\Bigg(\sum_{\substack{l'\in W\\ (l_1',l_2',l_3')=(l_1,l_2,l_3)}}&+\sum_{\substack{l'\in W\\ (l_1',l_2',l_3')=(q_1-l_1,q_2-l_2,q_3-l_3)}}\Bigg)\abs{\hat{V}_1(l')}^2\nonumber\\
=\Bigg(&\sum_{\substack{l'\in W\\ (l_1',l_2',l_3')=(l_1,l_2,l_3)}}+\sum_{\substack{l'\in W\\ (l_1',l_2',l_3')=(q_1-l_1,q_2-l_2,q_3-l_3)}}\Bigg)\abs{\hat{Y}_1(l')}^2.\label{sumv1y1finalfermi}
\end{align}

By \eqref{v1vy1y} and Lemma \ref{stseparablefermi}, for any $l'=(l'_1,l'_2,\cdots,l'_d)\in W$ with $l'_1\neq 0, l'_2\neq 0$, one has \begin{align}
\hat{V_1}(l')=0.\nonumber
\end{align}
Since $l_1,l_2,q_1-l_1,q_2-l_2$ are both non-zero, if follows from  \eqref{sumv1y1finalfermi} that
\begin{align}
\sum_{\substack{l'\in W\\ (l_1',l_2',l_3')=(l_1,l_2,l_3)}}\abs{\hat{Y}_1(l')}^2=0.\nonumber
\end{align}
Since $l_3$ is arbitrary, we obtain that
$$\sum_{\substack{l'\in W\\ (l_1',l_2')=(l_1,l_2)}}\abs{\hat{Y}_1(l')}^2=0.$$
Thus $\hat{Y}_1(l)=0$ for any $l=(l_1,l_2,\cdots,l_d)\in W$ with $l_1\neq 0, l_2\neq 0$. It follows from Lemma \ref{stseparablefermi} that $Y=Y_1+\lambda_2$ is $(1,2)$-separable.
\end{proof}

Now we state our main results, whose proofs follow directly from Theorem \ref{theoremoplussp}, Theorem \ref{d1d2drnewsepafermith} and Theorem \ref{thmferst}.

\begin{theorem}\label{theofer1}
Let $\sum_{j=1}^rd_j=d\geq 3$ with $r\geq 2,d_j\geq1,\ j=1,2,\cdots,r$. Assume that $V$ and $Y$ are real $\Gamma$-periodic functions on $\Z^d$, and $V$ is $(d_1,d_2,\cdots,d_r)$-separable. Suppose further that for any $1\leq i<m\leq r$ and any $s\in \{1+\sum_{j=0}^{i-1}d_j,2+\sum_{j=0}^{i-1}d_j,\cdots,\sum_{j=0}^{i}d_j\}$, $t\in \{1+\sum_{j=0}^{m-1}d_j,2+\sum_{j=0}^{m-1}d_j,\cdots,\sum_{j=0}^{m}d_j\}$, {there exist $\lambda_1=\lambda_1(s,t)\in\C,\lambda_2=\lambda_2(s,t)\in\C$ and $S=S(s,t)\subset \{1,2,\cdots,d\}$ }with $s,t\in S$ and  $\# S\geq 3$, such that $V$ and $Y$ {are generalized partially Fermi isospectral}. Then $Y$ is also $(d_1,d_2,\cdots,d_r)$-separable.
\end{theorem}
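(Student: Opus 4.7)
The plan is to package together the three structural results already established in the paper: Theorem \ref{theoremoplussp}, Theorem \ref{d1d2drnewsepafermith}, and Theorem \ref{thmferst}. The whole proof should be a routine aggregation, with no new estimate or Fourier manipulation required.

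First I would invoke Theorem \ref{d1d2drnewsepafermith} on the hypothesis side. Since $V$ is $(d_1,d_2,\cdots,d_r)$-separable, that theorem gives that $V$ is $(s,t)$-separable for every admissible index pair, namely for every $1\leq i<m\leq r$ and every $s\in\{1+\sum_{j=0}^{i-1}d_j,\cdots,\sum_{j=0}^{i}d_j\}$, $t\in\{1+\sum_{j=0}^{m-1}d_j,\cdots,\sum_{j=0}^{m}d_j\}$. This reduces the problem from a global separability property to a family of pairwise ones indexed by $(s,t)$.

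Next, I would apply Theorem \ref{thmferst} separately to each such pair $(s,t)$. For each such pair, the hypothesis of Theorem \ref{theofer1} provides $\lambda_1(s,t),\lambda_2(s,t)\in\mathbb{C}$ and a set $S=S(s,t)\subset\{1,2,\cdots,d\}$ with $s,t\in S$ and $\#S\geq 3$ such that $V$ and $Y$ are generalized partially Fermi isospectral with these data. Since $V$ is $(s,t)$-separable and $s,t\in S$, Theorem \ref{thmferst} yields that $Y$ is $(s,t)$-separable as well. This step is the substantive one, but it is exactly what was proved earlier; the dependence of $\lambda_1,\lambda_2,S$ on the pair $(s,t)$ is harmless because Theorem \ref{thmferst} only needs the pair-dependent isospectrality and never compares different pairs.

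Finally, having produced $(s,t)$-separability of $Y$ for every admissible pair, I would close the argument by invoking the converse direction of Theorem \ref{d1d2drnewsepafermith}: these pairwise separabilities force $Y$ to be $(d_1,d_2,\cdots,d_r)$-separable. The one thing to double-check is the compatibility of hypotheses, namely that $d\geq 3$ and that $\#S(s,t)\geq 3$ are available for every pair so that Theorem \ref{thmferst} actually applies; both are granted by the statement of Theorem \ref{theofer1}. I do not foresee any genuine obstacle: the whole proof is a clean concatenation, with Theorem \ref{thmferst} doing all the analytic work and Theorem \ref{d1d2drnewsepafermith} serving as a combinatorial/Fourier translation device on both ends.
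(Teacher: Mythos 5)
Your proposal is correct and is exactly the argument the paper intends: the paper states that Theorem \ref{theofer1} "follows directly from Theorem \ref{theoremoplussp}, Theorem \ref{d1d2drnewsepafermith} and Theorem \ref{thmferst}", which is precisely your concatenation of the forward direction of Theorem \ref{d1d2drnewsepafermith} for $V$, Theorem \ref{thmferst} applied pairwise with the pair-dependent data $\lambda_1(s,t),\lambda_2(s,t),S(s,t)$, and the converse direction of Theorem \ref{d1d2drnewsepafermith} for $Y$. No gaps.
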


{\begin{theorem}
	Let $\sum_{j=1}^rd_j=d\geq 3$ with $r\geq 3,d_j\geq1,\ j=1,2,\cdots,r$. Assume that $V$ and $Y$ are real $\Gamma$-periodic functions on $\Z^d$, and $V$ is $(d_1,d_2,\cdots,d_{r-1})\oplus d_r$-separable. Suppose further that for any $1\leq i<m\leq r-1$ and any $s\in \{1+\sum_{j=0}^{i-1}d_j,2+\sum_{j=0}^{i-1}d_j,\cdots,\sum_{j=0}^{i}d_j\}$, $t\in \{1+\sum_{j=0}^{m-1}d_j,2+\sum_{j=0}^{m-1}d_j,\cdots,\sum_{j=0}^{m}d_j\}$, there exist $\lambda_1=\lambda_1(s,t)\in\C,\lambda_2=\lambda_2(s,t)\in\C$ and $S=S(s,t)\subset \{1,2,\cdots,d\}$ with $s,t\in S$ and  $\# S\geq 3$, such that $V$ and $Y$ are generalized partially Fermi isospectral. Then $Y$ is also $(d_1,d_2,\cdots,d_{r-1})\oplus d_r$-separable.
\end{theorem}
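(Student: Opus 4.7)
The plan is to deduce this theorem as a direct consequence of the two main tools already established: the characterization of $(d_1,d_2,\cdots,d_{r-1})\oplus d_r$-separability in terms of pairwise $(s,t)$-separability (Theorem \ref{theoremoplussp}), and the transfer of $(s,t)$-separability under generalized partial Fermi isospectrality (Theorem \ref{thmferst}). The structure mirrors exactly the deduction of Theorem \ref{theofer1} from Theorem \ref{d1d2drnewsepafermith} and Theorem \ref{thmferst}.

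First, I would invoke Theorem \ref{theoremoplussp} on $V$: since $V$ is assumed to be $(d_1,d_2,\cdots,d_{r-1})\oplus d_r$-separable, it follows that $V$ is $(s,t)$-separable for every $1\leq i<m\leq r-1$ and every admissible $s\in \{1+\sum_{j=0}^{i-1}d_j,\cdots,\sum_{j=0}^{i}d_j\}$, $t\in \{1+\sum_{j=0}^{m-1}d_j,\cdots,\sum_{j=0}^{m}d_j\}$. Note that this step uses only the forward (necessary) direction of Theorem \ref{theoremoplussp}, which is the trivial direction since $(d_1,d_2,\cdots,d_{r-1})\oplus d_r$-separability clearly implies each such pairwise separability.

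Next, for each such fixed pair $(s,t)$, the hypothesis supplies $\lambda_1(s,t),\lambda_2(s,t)\in\C$ and $S(s,t)\subset\{1,2,\cdots,d\}$ with $s,t\in S(s,t)$ and $\#S(s,t)\geq 3$ such that $V$ and $Y$ are generalized partially Fermi isospectral with respect to this data. Because $s,t\in S(s,t)$, we may apply Theorem \ref{thmferst} to this triple and conclude that $Y$ is $(s,t)$-separable. Doing this for every $(s,t)$ in the admissible range above, we obtain that $Y$ is $(s,t)$-separable for every $1\leq i<m\leq r-1$ and every admissible $s,t$.

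Finally, I would apply the nontrivial (sufficient) direction of Theorem \ref{theoremoplussp} to $Y$: since $Y$ satisfies precisely the pairwise $(s,t)$-separability conditions on indices $i,m$ with $1\leq i<m\leq r-1$, we conclude that $Y$ is $(d_1,d_2,\cdots,d_{r-1})\oplus d_r$-separable. There is no real obstacle here because the arithmetic assumption $d\geq 3$ and $\#S(s,t)\geq 3$ ensures the applicability of Theorem \ref{thmferst}, and no new estimates are needed — the statement is genuinely a corollary of the previously proved results, analogous to how Theorem \ref{theofer1} follows from Theorem \ref{d1d2drnewsepafermith} and Theorem \ref{thmferst}.
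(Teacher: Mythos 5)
Your proposal is correct and follows exactly the route the paper intends: the paper explicitly states that this theorem "follows directly from Theorem \ref{theoremoplussp}, Theorem \ref{d1d2drnewsepafermith} and Theorem \ref{thmferst}," which is precisely your chain of using the easy direction of Theorem \ref{theoremoplussp} on $V$, transferring each $(s,t)$-separability to $Y$ via Theorem \ref{thmferst}, and then applying the converse direction of Theorem \ref{theoremoplussp} to $Y$. No gaps.
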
}

\begin{corollary}\label{maincorfermi}
Let $V$ and $Y$ be real $\Gamma$-periodic functions on $\Z^d$ with $d\geq 3$, and $V$ is $(1,1,\cdots,1)$-separable. Suppose that for any $1\leq s<t\leq d$, there exist {$\lambda_1=\lambda_1(s,t)\in\C,\lambda_2=\lambda_2(s,t)\in\C$ and $S=S(s,t)\subset \{1,2,\cdots,d\}$} with $s,t\in S$ and $\# S\geq 3$, such that $V$ and $Y$ are {generalized partially Fermi isospectral}. Then $Y$ is also $(1,1,\cdots,1)$-separable.
\end{corollary}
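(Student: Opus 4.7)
The plan is to derive this corollary as a direct specialization of Theorem \ref{theofer1}. The key observation is that $(1,1,\cdots,1)$-separability is exactly the case $r=d$, $d_1=d_2=\cdots=d_r=1$ of $(d_1,d_2,\cdots,d_r)$-separability. Under this choice, $\sum_{j=0}^{i-1}d_j = i-1$ and $\sum_{j=0}^{i}d_j = i$, so for each $i$ the index set $\{1+\sum_{j=0}^{i-1}d_j,\cdots,\sum_{j=0}^{i}d_j\}$ collapses to the singleton $\{i\}$. Consequently, ranging over all $1\leq i<m\leq r=d$ and all $s\in\{i\}$, $t\in\{m\}$ is the same as ranging over all pairs $(s,t)$ with $1\leq s<t\leq d$.

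With this identification, the hypothesis of the corollary, namely the existence of $\lambda_1(s,t), \lambda_2(s,t)\in\C$ and $S(s,t)\subset\{1,2,\cdots,d\}$ with $s,t\in S(s,t)$ and $\#S(s,t)\geq 3$ such that $V$ and $Y$ are generalized partially Fermi isospectral (with respect to these data), matches exactly the hypothesis of Theorem \ref{theofer1} applied to the tuple $(1,1,\cdots,1)$. Since $V$ is assumed $(1,1,\cdots,1)$-separable, Theorem \ref{theofer1} then forces $Y$ to be $(1,1,\cdots,1)$-separable as well.

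So the entire proof is a one-line invocation: verify that the index bookkeeping reduces the general $(d_1,\cdots,d_r)$ setting to the all-ones case, then quote Theorem \ref{theofer1}. There is no genuine obstacle here; the only thing to be careful about is the indexing convention (using $d_0=0$ as declared in Section 2) to confirm that the singleton reduction is literal and that $d\geq 3$ indeed allows a set $S$ of size at least $3$ containing any prescribed pair $s<t$. Since $d\geq 3$, for any $s<t$ in $\{1,\ldots,d\}$ we can pick any third index to include in $S$, so the admissibility of $S(s,t)$ is automatic from the hypothesis. Thus the corollary follows immediately.
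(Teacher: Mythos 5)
Your proof is correct and matches the paper's intent: the paper presents this corollary as an immediate consequence of the preceding results (it is exactly Theorem \ref{theofer1} with $r=d$ and $d_1=\cdots=d_r=1$, equivalently Theorem \ref{d1d2drnewsepafermith} combined with Theorem \ref{thmferst} applied to every pair $s<t$). The index bookkeeping you carry out is the whole content of the verification, and it is done correctly.
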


We remark that to prove that two $\Gamma$-periodic potentials have the same $(d_1,d_2,\cdots,d_r)$-separability, we require less information in Theorem \ref{theofer1} and Corollary \ref{maincorfermi} {than those in the existed results}. For example, if $d_1=d_2=\cdots=d_r=\frac{d}{r}$, in some sense, we only need that at most $\binom{r}{2}\times \left(\frac{d}{r}\right)^2$
projections of the Fermi variety on $3$ - dimensional subspace are the same, instead of the coincidence of whole $d$-dimensional Fermi varieties.

Finally in this section, we study the isospectrality of the components of two generalized Fermi isospectral potentials.
Recall that $V$ and $Y$ are Floquet isospectral if $P_V(k,\lambda)=P_Y(k,\lambda)$ for any $k\in\R^d$ and any $\lambda\in\C$.

{\begin{theorem}\label{maintheorem325fer}
Let $d=\sum_{j=1}^rd_j$ with $r\geq 3,d_j\geq 1,\ j=1,2,\cdots,r$, and $d-d_j-d_r\geq 2$, $j=1,2,\cdots,r-1$.
Assume that $V,Y$ are complex $\Gamma$-periodic functions on $\mathbb{Z}^d$, and there exist $\lambda_1,\lambda_2\in\C$ such that $V$ and $Y$ are generalized Fermi isospectral, and
\[V(n)=\sum_{j=1}^{r-1} V_j(\tilde{n}_j,\tilde{n}_r),\quad Y(n)=\sum_{j=1}^{r-1} Y_j(\tilde{n}_j,\tilde{n}_r)\] are both $(d_1,d_2,\cdots,d_{r-1})\oplus d_r$-separable. Then there exist functions $U_{j;1}(\tilde{n}_r)$ and $U_{j;2}(\tilde{n}_r)$, such that $V_j(\tilde{n}_{j},\tilde{n}_r)+U_{j;1}(\tilde{n}_r)$ and $Y_j(\tilde{n}_{j},\tilde{n}_r)+U_{j;2}(\tilde{n}_r)$ are Floquet isospectral, $j=1,2,\cdots,r-1$.
\end{theorem}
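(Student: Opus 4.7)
The plan is to exploit the partial block structure of the Floquet matrix $D_V(k)$ that is induced by the $(d_1,\ldots,d_{r-1})\oplus d_r$-separability. I will organize the $Q$-dimensional representation space of $D_V(k)$ as $\bigoplus_{\tilde{n}_r}\C^{Q_*}$, where $\tilde{n}_r$ ranges over its fundamental domain and $Q_*=Q/(q_{d-d_r+1}\cdots q_d)$, so that the separability of $V$ yields
\begin{equation*}
D_V(k)=L_r(\tilde{k}_r)\otimes I_{Q_*}+\bigoplus_{\tilde{n}_r}D_{W_{\tilde{n}_r}}(\tilde{k}_1,\ldots,\tilde{k}_{r-1}),
\end{equation*}
where $L_r(\tilde{k}_r)$ is the $d_r$-dimensional Floquet-reduced Laplacian and $W_{\tilde{n}_r}(\hat{n}):=\sum_{j=1}^{r-1}V_j(\tilde{n}_j,\tilde{n}_r)$ is completely $(d_1,\ldots,d_{r-1})$-separable on $\Z^{d-d_r}$. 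By complete separability, each block further decomposes as a tensor sum of the lower-dimensional Floquet matrices $D_{V_j(\cdot,\tilde{n}_r)}(\tilde{k}_j)$, and an identical decomposition holds for $D_Y$.

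For each fixed $j\in\{1,\ldots,r-1\}$, I isolate the $j$-th component by restricting the matrix $A_z+B_V-\lambda_1 I$ from Lemma~\ref{lemunitary} to the subspace $S_j:=\{n\in W:\tilde{n}_i=\mathbf{0}\text{ for all }i\in\{1,\ldots,r-1\}\setminus\{j\}\}$. Setting
\begin{equation*}
U_{j;1}(\tilde{n}_r):=\sum_{i\in\{1,\ldots,r-1\}\setminus\{j\}}V_i(\mathbf{0},\tilde{n}_r)-\lambda_1
\end{equation*}
and letting $C(z)$ denote the sum of $z_i+z_i^{-1}$ over the coordinate indices $i$ lying outside the $j$-th and $r$-th coordinate blocks, the sparse Fourier structure of $B_V$ provided by Theorem~\ref{theoremoplussp} leads by direct calculation to
\begin{equation*}
(A_z+B_V-\lambda_1 I)\big|_{S_j}=\tilde{A}_z^{(j,r)}+B_{V_j+U_{j;1}}+C(z)\,I,
\end{equation*}
where $\tilde{A}_z^{(j,r)}$ is the diagonal matrix of Lemma~\ref{lemunitary} for the $(d_j+d_r)$-dimensional Floquet matrix of $V_j+U_{j;1}$. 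Denoting the corresponding $z$-variables by $\tilde{z}_j,\tilde{z}_r$, this yields $\det[(A_z+B_V-\lambda_1 I)|_{S_j}]=\tilde{\mathcal{P}}_{V_j+U_{j;1}}(\tilde{z}_j,\tilde{z}_r,-C(z))$, with an analogous identity for $Y$ involving $U_{j;2}(\tilde{n}_r):=\sum_{i\neq j}Y_i(\mathbf{0},\tilde{n}_r)-\lambda_2$ and the \emph{same} function $C(z)$.

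The third step is to extract these $S_j$-block identities from the full generalized Fermi identity $\det(A_z+B_V-\lambda_1 I)\equiv\det(A_z+B_Y-\lambda_2 I)$. I plan a Schur-complement expansion with respect to $S_j$ and its complement, combined with a limiting procedure in the $z_i$'s for $i$ outside the $j$-th and $r$-th coordinate blocks; the hypothesis $d-d_j-d_r\geq 2$ guarantees at least two such free directions, and by the coprimality of the $q_j$'s one can (in the spirit of Theorem~\ref{thmferst} and Lemma~\ref{lemcoprimeliucpam}) choose specializations making the cross-block corrections from the off-diagonal components $V_{i_0}$ (with $i_0\neq j,r$) cancel. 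The resulting reduced identity $\tilde{\mathcal{P}}_{V_j+U_{j;1}}(\tilde{z}_j,\tilde{z}_r,-C(z))\equiv\tilde{\mathcal{P}}_{Y_j+U_{j;2}}(\tilde{z}_j,\tilde{z}_r,-C(z))$ then promotes to full Floquet isospectrality: since $-C(z)$ depends nontrivially on the remaining free parameters, varying them makes $-C(z)$ sweep out a full range of spectral values, giving $\tilde{\mathcal{P}}_{V_j+U_{j;1}}(\tilde{z}_j,\tilde{z}_r,\lambda)\equiv\tilde{\mathcal{P}}_{Y_j+U_{j;2}}(\tilde{z}_j,\tilde{z}_r,\lambda)$ as Laurent polynomials in $(\tilde{z}_j,\tilde{z}_r,\lambda)$, which is the desired Floquet isospectrality on $\Z^{d_j+d_r}$.

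The main obstacle I anticipate is this third step. Unlike the completely separable setting of Theorem~\ref{theoliucpam}(II), the $\tilde{n}_r$-coupling prevents $\mathcal{P}_V(z,\lambda_1)$ from factoring as a product of characteristic polynomials of its individual components, so the $S_j$-block contribution cannot be read off directly. The technical core is the asymptotic analysis showing that the off-block corrections arising from Fourier coefficients $\hat{V}_{i_0}(\tilde{l}_{i_0},\tilde{l}_r)$ with $i_0\neq j,r$ indeed vanish or cancel under the intended limits, which should proceed via two-plane intersection arguments analogous to those used in the proof of Theorem~\ref{thmferst}.
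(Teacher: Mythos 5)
Your step (2) already contains an error: the formula $U_{j;1}(\tilde{n}_r)=\sum_{i\neq j}V_i(\mathbf{0},\tilde{n}_r)-\lambda_1$ is inconsistent with the reduction you invoke. In Lemma \ref{lemunitary} the index $n\in W$ labels Fourier modes, so the restriction of $B_V$ to your subspace $S_j$ has entries $\hat{V}(\tilde{l}_j,\mathbf{0},\cdots,\mathbf{0},\tilde{l}_r)$; for $\tilde{l}_j\neq\mathbf{0}$ this is $\hat{V}_j(\tilde{l}_j,\tilde{l}_r)$, but for $\tilde{l}_j=\mathbf{0}$ it equals $\hat{V}_j(\mathbf{0},\tilde{l}_r)+\sum_{i\neq j}\hat{V}_i(\mathbf{0},\tilde{l}_r)$, i.e.\ the correction to $V_j$ is the \emph{average} $\sum_{i\neq j}\tilde{q}_i^{-1}\sum_{\tilde{n}_i}V_i(\tilde{n}_i,\tilde{n}_r)$ (this is exactly the paper's $U_1$, up to the additive constant that normalizes $\hat{\tilde{V}}_1(\mathbf{0},\mathbf{0})=0$), not the evaluation at $\tilde{n}_i=\mathbf{0}$. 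This is fixable, but as written your identity for $(A_z+B_V-\lambda_1 I)|_{S_j}$ is false.

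The genuine gap is your third step, which you yourself flag as the "main obstacle" and never carry out; moreover the plan as stated is internally in tension. To decouple $S_j$ from its complement you must push the middle variables into a degenerate regime, but then $-C(z)$, which you want to use as the spectral parameter, diverges as well, so the same variables cannot simultaneously suppress the cross-block couplings and sweep $\lambda$ over $\C$; and if the middle variables stay finite, the couplings through $\hat{V}_{i_0}$ do not cancel and no Schur-complement identity of the form you want holds. The paper resolves precisely this conflict with the substitution \eqref{zd1lambda}: one middle variable $z_{d_1+1}$ is slaved to a new free parameter $\lambda$ so that on the $S_j$ rows the diagonal becomes $-\lambda$ plus the block terms as in \eqref{diagA}, while the \emph{remaining} middle variables are sent to infinity; by coprimality the complement diagonal grows like the nontrivial linear forms $\sum_{i}\bigl(\rho^i_{n_i}-\rho^{d_1+1}_{n_{d_1+1}}\bigr)z_i$, so the top-order coefficient of $\tilde{\mathcal{P}}_V(z,\lambda_1)$ in those variables is $\tilde{\mathcal{P}}_{\tilde{V}_1}(z',\lambda)$ times an explicit nonvanishing product, as in \eqref{pvz1fer}; no cancellation of cross-terms is needed, since the bounded off-diagonal entries simply drop to lower order. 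This division of labor is exactly why $d-d_j-d_r\geq 2$ is assumed (one middle coordinate carries $\lambda$, at least one more is sent to infinity), and Theorem \ref{v-yequlambda1-2} is what makes the slaved variable the same function of $\lambda$ for $V$ and $Y$. Since your proposal supplies no substitute for this mechanism, the argument is incomplete at its decisive step.
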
}

\begin{proof}We only prove the result for $j=1$. Let $n=(n_1,n_2,\cdots,n_d)=(\tilde{n}_1,\tilde{n}_2,\cdots,\tilde{n}_r)$.
Define $U_1:\mathbb{Z}^{d_r}\to\mathbb{C}$ and $U_2:\mathbb{Z}^{d_r}\to\mathbb{C}$ as
\[U_1(\tilde{n}_r)=\sum_{j=2}^{r-1}\frac{1}{\tilde{q}_j}\sum_{\tilde{n}_j\in W_j}V_j(\tilde{n}_j,\tilde{n}_r)+u_1,\quad
U_2(\tilde{n}_r)=\sum_{j=2}^{r-1}\frac{1}{\tilde{q}_j}\sum_{\tilde{n}_j\in W_j}Y_j(\tilde{n}_j,\tilde{n}_r)+u_2,\]
where $u_1\in\C,u_2\in\C$ are some constants will be defined later,
$\tilde{q}_j=\prod_{i=1}^{d_j}q_{i+\sum_{m=0}^{j-1}d_m}$ and
\begin{align}
W_j=\Big\{\tilde{n}_j\in \mathbb{Z}^{d_j}:n=(\tilde{n}_1,\cdots,\tilde{n}_j,\cdots,\tilde{n}_r)=(\mathbf{0},\cdots,\mathbf{0},\tilde{n}_j,\mathbf{0},\cdots,\mathbf{0})\in W\Big\}.\nonumber
\end{align}
Next we prove that $\tilde{V}_1(\tilde{n}_1,\tilde{n}_r)=V_1(\tilde{n}_1,\tilde{n}_r)+U_1(\tilde{n}_r)$ and $\tilde{Y}_1(\tilde{n}_1,\tilde{n}_r)=Y_1(\tilde{n}_1,\tilde{n}_r)+U_2(\tilde{n}_r)$ are Floquet isospectral with proper $u_1,u_2$ being such that
\begin{align}
\hat{\tilde{V}}_1(\mathbf{0},\mathbf{0})=\hat{\tilde{Y}}_1(\mathbf{0},\mathbf{0})=0.\label{mathbf00vy}
\end{align}

For any $\hat{l}=(l_1,\cdots,l_{d_1},l_{d-d_r+1},\cdots,l_d)=(\tilde{l}_1,\tilde{l}_r)$, one has
\begin{align}
\hat{\tilde{V}}_1(\tilde{l}_1,\tilde{l}_r)=&\frac{1}{\tilde{q}_1\tilde{q}_r}\sum_{\tilde{n}_1\in W_1,\tilde{n}_r\in W_r}\left(V_1(\tilde{n}_1,\tilde{n}_r)+U_1(\tilde{n}_r)\right)e^{-2\pi\mathbf{i}\left(\sum_{i=1}^{d_1}+\sum_{i=d-d_r+1}^d\right)\frac{n_il_i}{q_i}}\nonumber\\
=&\begin{cases}
\hat{V}_1(\tilde{l}_1,\tilde{l}_r),&\mathrm{if}\ \tilde{l}_1\neq\mathbf{0},\\
\hat{V}_1(\mathbf{0},\tilde{l}_r)+\hat{U}_1(\tilde{l}_r),&\mathrm{if}\ \tilde{l}_1=\mathbf{0}.
\end{cases}\nonumber
\end{align}
For any $l=(l_1,\cdots,l_{d_1},0\cdots,0,l_{d-d_r+1},\cdots,l_d)=(\tilde{l}_1,\mathbf{0},\cdots,\mathbf{0},\tilde{l}_r)\in W,$
direct computations show that
\begin{align}
\hat{V}(l)=\begin{cases}
\hat{V}_1(\tilde{l}_1,\tilde{l}_r),&\mathrm{if}\ \tilde{l}_1\neq\mathbf{0},\\
\hat{V}_1(\mathbf{0},\tilde{l}_r)+\hat{U}_1(\tilde{l}_r)-u_1\delta_{(l;\mathbf{0})},&\mathrm{if}\ \tilde{l}_1=\mathbf{0}.
\end{cases}\nonumber
\end{align}
{Therefore, for any non-zero $l=(\tilde{l}_1,\mathbf{0},\cdots,\mathbf{0},\tilde{l}_r)\in W$,
\begin{align}
\hat{V}(l)=\hat{\tilde{V}}_1(\tilde{l}_1,\tilde{l}_r).\label{vhatequalvtildehat}
\end{align}
In a similar way,
$\hat{Y}(l)=\hat{\tilde{Y}}_1(\tilde{l}_1,\tilde{l}_r)$ for any $l=(\tilde{l}_1,\mathbf{0},\cdots,\mathbf{0},\tilde{l}_r)\in W$ with $l\neq \mathbf{0}$.}

Note that $d-d_1-d_r\geq 2$.  For any $\lambda\in\C$ and any $\hat{z}=(z_{d_1+2},z_{d_1+3},\cdots,z_{d-d_r})\in\C^{d-d_1-d_r-1}$, denote by $z_{d_1+1}=z_{d_1+1}([V]-\lambda_1,\lambda,\hat{z})$ being the unique solution of
\begin{align}
z_{d_1+1}+\frac{1}{z_{d_1+1}}+\sum_{j=d_1+2}^{d-d_r}\left(z_j+\frac{1}{z_j}\right)+[V]-\lambda_1=-\lambda,\label{zd1lambda}
\end{align}
with $\abs{z_{d_1+1}}\to\infty$ if
$\abs{\sum_{j=d_1+2}^{d-d_r}z_j}\to\infty$
and $\abs{z_j}\to\infty$, $j=d_1+2,d_1+3,\cdots,d-d_r$.

Then for any $n=(n_1,\cdots,n_d)=(\tilde{n}_1,\mathbf{0},\cdots,\mathbf{0},\tilde{n}_r)\in W$,
\begin{align}
[V]-\lambda_1+&\sum_{j=1}^d\Big(\rho^j_{n_j}z_j+\frac{1}{\rho^j_{n_j}z_j}\Big)\nonumber\\
=&-\lambda+\sum_{j=1}^{d_1}\Big(\rho^j_{n_j}z_j+\frac{1}{\rho^j_{n_j}z_j}\Big)+\sum_{j=d-d_r+1}^{d}
\Big(\rho^j_{n_j}z_j+\frac{1}{\rho^j_{n_j}z_j}\Big).\label{diagA}
\end{align}
For any $n=(n_1,\cdots,n_d)=(\tilde{n}_1,\tilde{n}_2,\cdots,\tilde{n}_r)$ with $(\tilde{n}_2,\tilde{n}_3,\cdots,\tilde{n}_{r-1})\neq \mathbf{0}$, as $\abs{\sum_{j=d_1+2}^{d-d_r}z_j}\to\infty$
and $\abs{z_j}\to\infty$, $j=d_1+2,d_1+3,\cdots,d-d_r$, one has
\begin{align}
[V]-\lambda_1&+\sum_{j=1}^d\Big(\rho^j_{n_j}z_j+\frac{1}{\rho^j_{n_j}z_j}\Big)\nonumber\\
=&\left(\sum_{j=1}^{d_1}+\sum_{j=d-d_r+1}^{d}\right)\left(\rho^j_{n_j}z_j+\frac{1}{\rho^j_{n_j}z_j}\right)+\sum_{j={d_1+2}}^{d-d_r}\left(\rho^j_{n_j}-\rho^{d_1+1}_{n_{d_1+1}}\right)z_j+O(1).\label{diagBnonzero}
\end{align}
Since $q_1,q_2,\cdots,q_d$ are pairwise coprime,
\begin{align}
\sum_{j={d_1+2}}^{d-d_r}\left(\rho^j_{n_j}-\rho^{d_1+1}_{n_{d_1+1}}\right)z_j\nonumber
\end{align}
is not identically equal to zero.
Therefore, by \eqref{diagA}, \eqref{diagBnonzero} and Lemma \ref{lemunitary}, for any $\lambda\in\C$ and any $z=(z_1,z_2,\cdots,z_d)=(\tilde{z}_1,z_{d_1+1},\hat{z},\tilde{z}_r)\in\C^d$ with $z_{d_1+1}=z_{d_1+1}([V]-\lambda_1,\lambda,\hat{z})$,
as $\abs{\sum_{j=d_1+2}^{d-d_r}z_j}\to\infty$
and $\abs{z_j}\to\infty$, $j=d_1+2,d_1+3,\cdots,d-d_r$,
\begin{align}
\tilde{\mathcal{P}}_V(z,\lambda_1)=&\tilde{\mathcal{P}}_V(\tilde{z}_1,z_{d_1+1}([V]-\lambda_1,\lambda,\hat{z}),\hat{z},\tilde{z}_r,\lambda_1)\label{pvz1fer}\\
=&\det(A+B-\lambda I)\times\prod_{\substack{n\in W\\(\tilde{n}_2,\tilde{n}_3,\cdots,\tilde{n}_{r-1})\neq \mathbf{0}}}\sum_{j={d_1+2}}^{d-d_r}\left(\rho^j_{n_j}-\rho^{d_1+1}_{n_{d_1+1}}\right)z_j\nonumber\\
&+O\left(\sum_{j=d_1+2}^{d-d_r}\abs{z_j}^{Q-\tilde{q}_1\tilde{q}_r-1}\right),\nonumber
\end{align}
where
\begin{align}
\prod_{\substack{n\in W\\(\tilde{n}_2,\tilde{n}_3,\cdots,\tilde{n}_{r-1})\neq \mathbf{0}}}\sum_{j={d_1+2}}^{d-d_r}\left(\rho^j_{n_j}-\rho^{d_1+1}_{n_{d_1+1}}\right)z_j\nonumber
\end{align}
consists of the terms $\prod_{j=d_1+2}^{d-d_r}z_j^{a_j}$ with $\sum_{j=d_1+2}^{d-d_r}a_j=Q-\tilde{q}_1\tilde{q}_r$,
 $A$ and $B$ are $(\tilde{q}_1\tilde{q}_r)\times (\tilde{q}_1\tilde{q}_r)$ matrices, $A$ is a diagonal matrix with entries
\begin{align}
A((\tilde{n}_1,\tilde{n}_r);(\tilde{n}_1',\tilde{n}_r'))=\left(\sum_{j=1}^{d_1}+\sum_{j=d-d_r+1}^{d}\right)\left(\rho^j_{n_j}z_j+\frac{1}{\rho^j_{n_j}z_j}\right)\delta_{((\tilde{n}_1,\tilde{n}_r);(\tilde{n}_1',\tilde{n}_r'))},\nonumber
\end{align}
and for any $(\tilde{n}_1,\tilde{n}_r)\neq (\tilde{n}_1',\tilde{n}_r')$,  $B((\tilde{n}_1,\tilde{n}_r);(\tilde{n}_1',\tilde{n}_r'))=\hat{V}(\tilde{n}_1-\tilde{n}_1',\mathbf{0},\cdots,\mathbf{0},\tilde{n}_r-\tilde{n}_r')$, and $B((\tilde{n}_1,\tilde{n}_r);(\tilde{n}_1,\tilde{n}_r))=0$.
By \eqref{mathbf00vy} and \eqref{vhatequalvtildehat}, we obtain that
\begin{align}
B((\tilde{n}_1,\tilde{n}_r);(\tilde{n}_1',\tilde{n}_r'))=\hat{\tilde{V}}_1(\tilde{n}_1-\tilde{n}_1',\tilde{n}_r-\tilde{n}_r').\nonumber
\end{align}
Let $z'=(z_1,\cdots,z_{d_1},z_{d-d_r+1},\cdots,z_d)$ and by Lemma \ref{lemunitary}, we conclude  that
\begin{align}
\det(A+B-\lambda I)=\tilde{\mathcal{P}}_{\tilde{V}_1}(z',\lambda).\label{detabpv1}
\end{align}

Similarly, one has
for any $\lambda\in\C$ and any $z=(z_1,z_2,\cdots,z_d)=(\tilde{z}_1,z_{d_1+1},\hat{z},\tilde{z}_r)\in\C^d$ with $z_{d_1+1}=z_{d_1+1}([Y]-\lambda_2,\lambda,\hat{z})$,
as $\abs{\sum_{j=d_1+2}^{d-d_r}z_j}\to\infty$
and $\abs{z_j}\to\infty$, $j=d_1+2,d_1+3,\cdots,d-d_r$,
\begin{align}
\tilde{\mathcal{P}}_Y(z,\lambda_2)=&\tilde{\mathcal{P}}_Y(\tilde{z}_1,z_{d_1+1}([Y]-\lambda_2,\lambda,\hat{z}),\hat{z},\tilde{z}_r,\lambda_2)\label{pyz1fer}\\
=&\det(A'+B'-\lambda I)\times\prod_{\substack{n\in W\\(\tilde{n}_2,\tilde{n}_3,\cdots,\tilde{n}_{r-1})\neq \mathbf{0}}}\sum_{j={d_1+2}}^{d-d_r}\left(\rho^j_{n_j}-\rho^{d_1+1}_{n_{d_1+1}}\right)z_j\nonumber\\
&+O\left(\sum_{j=d_1+2}^{d-d_r}\abs{z_j}^{Q-\tilde{q}_1\tilde{q}_r-1}\right)\nonumber
\end{align}
with
\begin{align}
\det(A'+B'-\lambda I)=\tilde{\mathcal{P}}_{\tilde{Y}_1}(z',\lambda).\label{detabpy1}
\end{align}

By Theorem \ref{v-yequlambda1-2}, $[V]-\lambda_1=[Y]-\lambda_2$. Then by \eqref{zd1lambda} there holds that $z_{d_1+1}([V]-\lambda_1,\lambda,\hat{z})=z_{d_1+1}([Y]-\lambda_2,\lambda,\hat{z})$.
Thus, for any $\lambda\in\C$ and any $z=(z_1,z_2,\cdots,z_d)\in\C^d$ with $z_{d_1+1}=z_{d_1+1}([V]-\lambda_1,\lambda,\hat{z})=z_{d_1+1}([Y]-\lambda_2,\lambda,\hat{z})$,
as $\abs{\sum_{j=d_1+2}^{d-d_r}z_j}\to\infty$
and $\abs{z_j}\to\infty$, $j=d_1+2,d_1+3,\cdots,d-d_r$, by \eqref{pvz1fer}-\eqref{detabpy1}, one has $\tilde{\mathcal{P}}_{\tilde{V}_1}(z',\lambda)=\tilde{\mathcal{P}}_{\tilde{Y}_1}(z',\lambda)$, from which the result follows.
\end{proof}

To finish this section, we prove the following Ambarzumian-type result.

\begin{theorem}
Let $V$ be a real $\Gamma$-periodic function on $\Z^d$ and $Y_0\in\R$ be a constant. Suppose that for any $1\leq s<t\leq d$, there exist $\lambda_1=\lambda_1(s,t)\in\C,\lambda_2=\lambda_2(s,t)\in\C$ and $S=S(s,t)\subset \{1,2,\cdots,d\}$ with $s,t\in S$ and $\# S\geq 3$, such that $V$ and the constant potential $Y=Y_0$ are generalized partially Fermi isospectral. Then $V$ is also a constant function and $V\equiv[V]=Y_0+\lambda_1(s,t)-\lambda_2(s,t)$.
\end{theorem}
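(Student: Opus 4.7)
The plan is to first use the earlier theorems together with the symmetry of the hypothesis to show that $V$ is $(1,1,\ldots,1)$-separable, and then to exploit the identity \eqref{sumv2y2} via a residue analysis to conclude that each $1$-dimensional component of $V$ is constant. The constant potential $Y\equiv Y_0$ is trivially $(1,1,\ldots,1)$-separable, and the definition of generalized partial Fermi isospectrality is symmetric under the interchange $(V,\lambda_1)\leftrightarrow(Y,\lambda_2)$. Hence Corollary \ref{maincorfermi} applied with $V$ and $Y$ interchanged (and $\lambda_1,\lambda_2$ interchanged) yields that $V$ is $(1,1,\ldots,1)$-separable, so I may write $V(n)=\sum_{j=1}^{d}V_j(n_j)$. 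Moreover, Theorem \ref{v-yequlambda1-2} applied to any pair $(s,t)$ gives $[V]=Y_0+\lambda_1(s,t)-\lambda_2(s,t)$, so this difference is automatically independent of $(s,t)$.

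Next, I fix an arbitrary pair $(s,t)$ and let $S=S(s,t)$, $\lambda_1=\lambda_1(s,t)$, $\lambda_2=\lambda_2(s,t)$; by the Remark after Definition \ref{defferisotrans} I may further assume $|S|=3$, say $S=\{s,t,u\}$. Since $\hat Y_1(l)=(Y_0-\lambda_2)\delta_{l,\mathbf{0}}$ and $\hat V_1(\mathbf{0})=[V]-\lambda_1=Y_0-\lambda_2$, the $l=\mathbf{0}$ terms on the two sides of \eqref{sumv2y2} cancel; the $(1,1,\ldots,1)$-separability of $V$ forces $\hat V(l)=0$ unless $l=l_k\mathbf{e}_k$ for some $k\in\{1,\ldots,d\}$ and some $l_k\neq 0$, so after this simplification \eqref{sumv2y2} takes the form
\[\sum_{k\in S}\sum_{l_k=1}^{q_k-1}|\hat V(l_k\mathbf{e}_k)|^2\,G_k(z,l_k)+C^{\bar S}\,H(z)=0,\]
where $H(z)=\sum_{n\in W}\bigl(\sum_{j\in S}\rho^{j}_{n_j}z_j\bigr)^{-2}$, $C^{\bar S}=\sum_{k\notin S}\sum_{l_k=1}^{q_k-1}|\hat V(l_k\mathbf{e}_k)|^2\geq 0$, and for $k\in S$, $G_k(z,l_k)=\sum_{n\in W}\bigl[\bigl(\sum_{j\in S}\rho^{j}_{n_j}z_j\bigr)\bigl(\sum_{j\in S,\,j\neq k}\rho^{j}_{n_j}z_j+\rho^{k}_{n_k+l_k}z_k\bigr)\bigr]^{-1}$.

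Finally I will show $C_k:=\sum_{l_k\neq 0}|\hat V(l_k\mathbf{e}_k)|^2=0$ for every $k\in\{1,\ldots,d\}$ by a residue analysis along the hypersurface $\{z:\sum_{j\in S}\rho^{j}_{n_j^{*}}z_j=0\}$ for a fixed $n^{*}\in W$. On this hypersurface $H$ has a second-order pole coming solely from the $n=n^{*}$ term, while each $G_k(\cdot,l_k)$ has only a first-order pole, so matching second-order-pole coefficients forces $C^{\bar S}=0$, and by nonnegativity $C_k=0$ for every $k\notin S$. For the simple-pole residue of $G_k$ with $k\in S$, combining the $n=n^{*}$ and $n=n^{*}-l_k\mathbf{e}_k$ contributions together with the roots-of-unity identity $(\rho^{k}_{l_k}-1)(\rho^{k}_{-l_k}-1)=-(\rho^{k}_{l_k}+\rho^{k}_{-l_k}-2)$ collapses the residue to $-1/(\rho^{k}_{n_k^{*}}z_k)$, independent of $l_k$. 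Writing $\zeta_k=\rho^{k}_{n_k^{*}}z_k$ for $k\in S$, the vanishing of the simple-pole coefficient becomes $\sum_{k\in S}C_k/\zeta_k=0$ on $\zeta_s+\zeta_t+\zeta_u=0$; substituting $\zeta_u=-\zeta_s-\zeta_t$ and clearing denominators yields the polynomial identity $C_t\zeta_s^{2}+(C_s+C_t-C_u)\zeta_s\zeta_t+C_s\zeta_t^{2}\equiv 0$ in two free variables, whose coefficients force $C_s=C_t=C_u=0$. Hence every $V_j$ is constant and $V\equiv[V]=Y_0+\lambda_1(s,t)-\lambda_2(s,t)$. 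The main obstacle is the residue computation above: one must recognize that the two distinct pole contributions of the double sum defining $G_k$ telescope through the above cancellation into a residue independent of $l_k$, after which the reduction to a two-variable polynomial identity and the extraction of each $C_k=0$ are elementary.
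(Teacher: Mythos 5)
Your proof is correct, and it follows the paper's skeleton up to the last step: like the paper, you use the symmetry of the definition plus Corollary \ref{maincorfermi} to get that $V$ is $(1,1,\cdots,1)$-separable, Theorem \ref{v-yequlambda1-2} to get $[V]=Y_0+\lambda_1(s,t)-\lambda_2(s,t)$, and then the identity \eqref{sumv2y2}, in which the $l=\mathbf{0}$ terms cancel because $\hat V_1(\mathbf{0})=[V]-\lambda_1=Y_0-\lambda_2$ and $\hat Y_1(l)=(Y_0-\lambda_2)\delta_{(l;\mathbf{0})}$, so that only the coefficients $\hat V(l_k\mathbf{e}_k)$ survive. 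Where you genuinely diverge is the extraction of these coefficients. The paper specializes $z_1=0$ in \eqref{midj123}, which collapses everything to the two-variable identity \eqref{fineefer} in $(z_2,z_3)$; a single double-pole limit $z_2=z_0^*$, $z_3\to-z_0^*$ then yields \eqref{fisumfer} and kills all $\hat V_1(l')$ with $l_2'=l_3'=0$, $l'\neq\mathbf{0}$, with no first-order pole analysis at all. You instead keep all three variables and match both the second-order and the first-order Laurent coefficients along one hyperplane $\sum_{j\in S}\rho^j_{n_j^*}z_j=0$: the double pole (only the $n\equiv n^*$ term of $H$, by pairwise coprimality) forces $C^{\bar S}=0$, and your telescoping residue computation is right — with $\omega=\rho^k_{l_k}$ one has $\frac{1}{\omega-1}+\frac{1}{\omega^{-1}-1}=-1$, so the two singular terms of $G_k$ (from $n\equiv n^*$ and $n\equiv n^*-l_k\mathbf{e}_k$) sum to $-1/(\rho^k_{n_k^*}z_k)$ independently of $l_k$, and the resulting constraint $C_t\zeta_s^2+(C_s+C_t-C_u)\zeta_s\zeta_t+C_s\zeta_t^2\equiv0$ does force $C_s=C_t=C_u=0$. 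The trade-off: the paper's $z_1=0$ trick makes the endgame a one-line double-pole count but treats the coordinate directions through the two ``other'' indices of $S$, while your version is more computational (the simple-pole bookkeeping is the delicate part, and you got it right) and has the mild aesthetic advantage that a single admissible triple $S$ annihilates $\hat V(l_k\mathbf{e}_k)$ for all $k$ simultaneously; both arguments still need the full hypothesis over all pairs $(s,t)$ for the separability step, so the theorem proved is the same.
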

\begin{proof}
By Theorem \ref{v-yequlambda1-2}, one has that $Y_0+\lambda_1(s,t)-\lambda_2(s,t)\equiv [V].$
To prove the result, we only need to show that $\hat{V}(l)=0$ for any non-zero $l\in W$.
By Corollary \ref{maincorfermi}, $V$ is $(1,1,\cdots,1)$-separable. For any non-zero $l=(l_1,l_2,\cdots,l_d)\in W$, if there are more than one $l_j\neq 0$, $j=1,2,\cdots,d$,
by Lemma \ref{stseparablefermi} and Theorem \ref{d1d2drnewsepafermith}, $\hat{V}(l)=0$. Therefore, we only need to prove that $\hat{V}(l)=0$ for any non-zero $l\in W$ with only one $l_j\neq 0$, say for example $l=(l_1,0,\cdots,0)$ with $l_1\in\{1,2,\cdots,q_1-1\}$.

By the assumption, there exist $\lambda_1,\lambda_2\in \C$ and $S\subset\{1,2,\cdots,d\}$ with $\{1,2\}\subset S$ and $\# S\geq 3$, such that $V$ and $Y=Y_0$ are generalized partially Fermi isospectral. Without loss of generality, assume that $S=\{1,2,3\}$. Then we have \eqref{midj123}. Let $z_1=0$, for all possible $(z_2,z_3)\in\C^2$, one has
\begin{align}
\sum_{\substack{0\leq n_i\leq q_i-1\\ i=2,3}}&\sum_{l'\in W}\frac{\abs{\hat{V_1}(l')}^2}{\left(\sum_{j=2}^3\rho^j_{n_j}z_j\right)\left(\sum_{j=2}^3\rho^j_{n_j+l_j'}z_j\right)}\nonumber\\
&=\sum_{\substack{0\leq n_i\leq q_i-1\\ i=2,3}}\sum_{l'\in W}\frac{\abs{\hat{Y_1}(l')}^2}{\left(\sum_{j=2}^3\rho^j_{n_j}z_j\right)\left(\sum_{j=2}^3\rho^j_{n_j+l_j'}z_j\right)},\label{fineefer}
\end{align}
where $V_1(n)=V(n)-\lambda_1$ and $Y_1(n)\equiv Y_0-\lambda_2$. Let $z_0^*\in\C$ be non-zero, since $q_2$ and $q_3$ are coprime, we have $\rho^2_{n_2}z_0^*-\rho^3_{n_3}z_0^*\neq0$ for any $(n_2,n_3)\neq (0,0)$ (module periodicity). Let $z_2=z_0^*$ and $z_3\to-z_0^*$, one obtains from \eqref{fineefer} that
\begin{align}
\sum_{\substack{l'\in W\\ l_2'=l_3'=0}}\abs{\hat{V_1}(l')}^2=\sum_{\substack{l'\in W\\ l_2'=l_3'=0}}\abs{\hat{Y_1}(l')}^2.\label{fisumfer}
\end{align}
Since $\hat{Y}_1(l')=(Y_0-\lambda_2)\delta_{(l';\mathbf{0})}$ and $\hat{V}_1(\mathbf{0})=[V]-\lambda_1=Y_0-\lambda_2$, it follows from \eqref{fisumfer} that $\hat{V}_1(l)=0$ for any $l=(l_1,0,\cdots,0)\in W$ with $l_1\neq 0$. Due to \eqref{v1vy1y}, the result holds.
\end{proof}
\begin{corollary}
	Let $V$ be a real $\Gamma$-periodic function on $\Z^d$. Suppose that for any $1\leq s<t\leq d$, there exist $\lambda_1(s,t)=\lambda_2(s,t)\in\C$ and $S=S(s,t)\subset \{1,2,\cdots,d\}$ with $s,t\in S$ and $\# S\geq 3$, such that $V$ and the zero potential are partially Fermi isospectral. Then $V\equiv0$.
\end{corollary}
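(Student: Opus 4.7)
The plan is to derive this corollary as an immediate specialization of the preceding Ambarzumian-type theorem with the constant $Y_0=0$. First I would unpack the definitions: by Definition \ref{defferisotrans}, $V$ and the zero potential being \emph{partially} Fermi isospectral means that they are \emph{generalized} partially Fermi isospectral with the additional constraint $\lambda_1(s,t)=\lambda_2(s,t)$. Consequently, the hypothesis of the corollary is a special case of the hypothesis of the previous theorem applied with $Y_0=0$, where moreover $\lambda_1(s,t)-\lambda_2(s,t)=0$ for every pair $(s,t)$.

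Next I would invoke the conclusion of that theorem: $V$ must be a constant function, equal to its own average, and
\[
V\equiv[V]=Y_0+\lambda_1(s,t)-\lambda_2(s,t).
\]
Substituting $Y_0=0$ and $\lambda_1(s,t)=\lambda_2(s,t)$ yields $V\equiv 0$, which is precisely the claim. Since the entire argument amounts to a direct substitution into an already-proved theorem, I do not expect any genuine technical obstacle; the only point requiring care is to explicitly verify the equivalence of the isospectrality notion used in the corollary with the one in the hypothesis of the theorem, so that no further hypothesis needs to be checked.
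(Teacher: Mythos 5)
Your proposal is correct and matches the paper's (implicit) argument: the corollary is exactly the Ambarzumian-type theorem specialized to $Y_0=0$ with $\lambda_1(s,t)=\lambda_2(s,t)$, whence $V\equiv[V]=Y_0+\lambda_1(s,t)-\lambda_2(s,t)=0$. Your observation that partial Fermi isospectrality is the $\lambda_1=\lambda_2$ case of the generalized notion is precisely the only point that needs checking, and it holds by Definition \ref{defferisotrans}.
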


\vspace{2mm}\noindent \textbf{Data availability}
No data was used for the research described in the article.

\vspace{2mm}\noindent \textbf{Conflict of interest} The authors report no conflict of interest.

\bibliographystyle{amsplain}

\end{document}